\let\old@tocline\@tocline
\let\section@tocline\@tocline
\newcommand{\subsection@dotsep}{4.5}
\newcommand{\subsubsection@dotsep}{4.5}
     \leaders\hbox{$\m@th
        \mkern \subsection@dotsep mu\hbox{.}\mkern \subsection@dotsep mu$}\hfill
\let\subsection@tocline\@tocline
\let\@tocline\old@tocline
     \leaders\hbox{$\m@th
        \mkern \subsubsection@dotsep mu\hbox{.}\mkern \subsubsection@dotsep mu$}\hfill
\let\subsubsection@tocline\@tocline
\let\@tocline\old@tocline
\let\old@l@subsection\l@subsection
\let\old@l@subsubsection\l@subsubsection
\def\@tocwriteb#1#2#3{%
  \begingroup
    \@xp\def\csname #2@tocline\endcsname##1##2##3##4##5##6{%
      \ifnum##1>\c@tocdepth
      \else \sbox\z@{##5\let\indentlabel\@tochangmeasure##6}\fi}%
    \csname l@#2\endcsname{#1{\csname#2name\endcsname}{\@secnumber}{}}%
  \endgroup
  \addcontentsline{toc}{#2}%
    {\protect#1{\csname#2name\endcsname}{\@secnumber}{#3}}}%
\newlength{\@tocsectionindent}
\newlength{\@tocsubsectionindent}
\newlength{\@tocsubsubsectionindent}
\newlength{\@tocsectionnumwidth}
\newlength{\@tocsubsectionnumwidth}
\newlength{\@tocsubsubsectionnumwidth}
\newcommand{\settocsectionnumwidth}[1]{\setlength{\@tocsectionnumwidth}{#1}}
\newcommand{\settocsubsectionnumwidth}[1]{\setlength{\@tocsubsectionnumwidth}{#1}}
\newcommand{\settocsubsubsectionnumwidth}[1]{\setlength{\@tocsubsubsectionnumwidth}{#1}}
\newcommand{\settocsectionindent}[1]{\setlength{\@tocsectionindent}{#1}}
\newcommand{\settocsubsectionindent}[1]{\setlength{\@tocsubsectionindent}{#1}}
\newcommand{\settocsubsubsectionindent}[1]{\setlength{\@tocsubsubsectionindent}{#1}}
\renewcommand{\l@section}{\section@tocline{1}{\@tocsectionvskip}{\@tocsectionindent}{\@tocsectionnumwidth}{\@tocsectionformat}}%
\renewcommand{\l@subsection}{\subsection@tocline{1}{\@tocsubsectionvskip}{\@tocsubsectionindent}{\@tocsubsectionnumwidth}{\@tocsubsectionformat}}%
\renewcommand{\l@subsubsection}{\subsubsection@tocline{1}{\@tocsubsubsectionvskip}{\@tocsubsubsectionindent}{\@tocsubsubsectionnumwidth}{\@tocsubsubsectionformat}}%
\newcommand{\@tocsectionformat}{}
\newcommand{\@tocsubsectionformat}{}
\newcommand{\@tocsubsubsectionformat}{}
\def\csname toc@1format\endcsname{\@tocsectionformat}
\def\csname toc@2format\endcsname{\@tocsubsectionformat}
\def\csname toc@3format\endcsname{\@tocsubsubsectionformat}
\newcommand{\settocsectionformat}[1]{\renewcommand{\@tocsectionformat}{#1}}
\newcommand{\settocsubsectionformat}[1]{\renewcommand{\@tocsubsectionformat}{#1}}
\newcommand{\settocsubsubsectionformat}[1]{\renewcommand{\@tocsubsubsectionformat}{#1}}
\newlength{\@tocsectionvskip}
\newcommand{\settocsectionvskip}[1]{\setlength{\@tocsectionvskip}{#1}}
\newlength{\@tocsubsectionvskip}
\newcommand{\settocsubsectionvskip}[1]{\setlength{\@tocsubsectionvskip}{#1}}
\newlength{\@tocsubsubsectionvskip}
\newcommand{\settocsubsubsectionvskip}[1]{\setlength{\@tocsubsubsectionvskip}{#1}}
\patchcmd{\tocsection}{\indentlabel}{\makebox[\@tocsectionnumwidth][l]}{}{}
\patchcmd{\tocsubsection}{\indentlabel}{\makebox[\@tocsubsectionnumwidth][l]}{}{}
\patchcmd{\tocsubsubsection}{\indentlabel}{\makebox[\@tocsubsubsectionnumwidth][l]}{}{}
\newcommand{\@sectypepnumformat}{}
\renewcommand{\contentsline}[1]{%
  \expandafter\let\expandafter\@sectypepnumformat\csname @toc#1pnumformat\endcsname%
  \csname l@#1\endcsname}
\newcommand{\@tocsectionpnumformat}{}
\newcommand{\@tocsubsectionpnumformat}{}
\newcommand{\@tocsubsubsectionpnumformat}{}
\newcommand{\setsectionpnumformat}[1]{\renewcommand{\@tocsectionpnumformat}{#1}}
\newcommand{\setsubsectionpnumformat}[1]{\renewcommand{\@tocsubsectionpnumformat}{#1}}
\newcommand{\setsubsubsectionpnumformat}[1]{\renewcommand{\@tocsubsubsectionpnumformat}{#1}}
\renewcommand{\@tocpagenum}[1]{%
  \hfill {\mdseries\@sectypepnumformat #1}}
\let\oldappendix\appendix
\renewcommand{\appendix}{%
  \leavevmode\oldappendix%
  \addtocontents{toc}{%
    \protect\settowidth{\protect\@tocsectionnumwidth}{\protect\@tocsectionformat\sectionname\space}%
    \protect\addtolength{\protect\@tocsectionnumwidth}{2em}}%
}
\let\oldtableofcontents\tableofcontents
\renewcommand{\tableofcontents}{%
  \vspace*{-5\linespacing}
  \oldtableofcontents}
\let\origsection=\section \def\section{\@ifstar{\origsection*}{\mysection}} 
\def\mysection{\@startsection{section}{1}\z@{.7\linespacing\@plus\linespacing}{.5\linespacing}{\normalfont\scshape\centering\S}}
\colorlet{darkishRed}{red!60!black}
\colorlet{darkishBlue}{blue!60!black}
\colorlet{darkishGreen}{green!50!black}
\colorlet{darkerishGreen}{green!30!black}
\colorlet{lightishGreen}{green!70!black}
\crefname{mainresult}{Theorem}{Theorems}
\newcommand{\COMMENT}[1]{{}}
\renewcommand{\leq}{\leqslant}
\renewcommand{\geq}{\geqslant}
\let\rho=\varrho
\let\phi=\varphi
\newcommand{\defn}[1]{{\color{darkishGreen}{\emph{#1}}}}
\def\calCommandfactory#1{%
   \expandafter\def\csname c#1\endcsname{\mathcal{#1}}}
\def\frakCommandfactory#1{%
   \expandafter\def\csname frak#1\endcsname{\mathfrak{#1}}}
\newcounter{ctr}
  \edef\X{\@Alph\c@ctr}
  \edef\Y{\@alph\c@ctr}
\newcommand{\bH}{\mathbf{H}}
\newtheorem{theorem}{Theorem}[section] 
\newtheorem{lemma}[theorem]{Lemma}
\newtheorem{conjecture}[theorem]{Conjecture}
\newtheorem{mainresult}{Theorem}
\newtheorem{claim}{Claim}
\crefname{claim}{Claim}{Claims}
\theoremstyle{definition}
\newtheorem*{definition*}{Definition}
\theoremstyle{remark}
\newcommand{\pdfOrNot}[2]{\ifbool{pdfBool}{{#1}}{{#2}}}
\def\lqedsymbol{\ifmmode$\lrcorner$\else{\unskip\nobreak\hfil
		\penalty50\hskip1em\null\nobreak\hfil$\rule{1.2ex}{1.2ex}$
		\parfillskip=0pt\finalhyphendemerits=0\endgraf}\fi}
\newenvironment{claimproof}[1][\proofname]
{%
	\proof[#1]%
}
{%
	\endproof%
}
\begin{document}

\title{Locally interval graphs are circular-arc graphs}

\author[T.\ Abrishami \and S.\ Albrechtsen \and N.\ Bowler \and P.\ Knappe\and J.\ Nickel]{Tara Abrishami, Sandra Albrechtsen, Nathan Bowler, Paul Knappe, Jana Katharina Nickel}

\address{Stanford University, Department of Mathematics}
\email{tara.abrishami@stanford.edu}

\address{Leipzig University, Institute of Mathematics}
\email{sandra@albrechtsen-mail.de}

\address{University of Hamburg, Department of Mathematics}
\email{\{nathan.bowler, paul.knappe, jana.nickel-2\}@uni-hamburg.de}

\thanks{T.A. is supported by the National Science Foundation Award Number DMS-2303251 and the Alexander von Humboldt Foundation. S.A. is supported by the Alexander von Humboldt Foundation in the framework of the Alexander von Humboldt Professorship of Daniel Král' endowed by the Federal Ministry of Education and Research. P.K. is supported by a doctoral scholarship of the Studienstiftung des deutschen Volkes.}

\keywords{Locally interval, interval, chordal, local covering}

\begin{abstract}
    Circular-arc graphs are graphs that can be represented as intersection graphs of subpaths of a cycle. Interval graphs are graphs that can be represented as intersection graphs of subpaths of a path. Since cycles are locally paths, every circular-arc graph is locally interval. In this paper, we prove that the converse holds as well: every locally interval graph is a circular-arc graph. This result and its proofs are connected to a recent broader study of structural local-global theory and build on previous work on locally chordal graphs. 
\end{abstract}

\maketitle

\section{Introduction}

If every vertex in a connected graph has degree at most two, then the graph is a path or a cycle. This observation can be extended: if the radius-$r$ ball around every vertex of a connected graph $G$ is a path, then the graph $G$ is either a path or a cycle of length greater than $2r$. Since small-radius balls in a long cycle are all paths, this is a characterization: a graph is a path or a long cycle if and only if every small-radius ball of the graph is a path.  

In this paper, we take this simple idea --- that paths and long cycles are precisely the shapes that are locally paths everywhere --- and apply it to a richer class of graphs. Roughly, we prove that the graphs that are locally ``path-like'' everywhere are precisely the graphs that are globally ``path-like'' or ``long cycle-like.'' To make this precise, let us give some definitions. A graph $G$ is an \defn{interval graph} if there is a path $P$ and a mapping $v \mapsto P_v$ from vertices $v$ of $G$ to subpaths $P_v$ of $P$ such that there is an edge $uv$ in $G$ if and only if $P_u$ and $P_v$ intersect in at least one vertex. We call such a mapping $v \mapsto P_v$ an \defn{interval representation (of $G$ over $P$)}. Interval graphs are a special case of more general graphs, region intersection graphs. A graph $G$ is a \defn{region intersection graph over $H$} if there is a mapping $v \mapsto H_v$ from vertices $v$ of $G$ to connected subgraphs (i.e. \defn{regions}) $H_v$ of $H$ such that $uv$ is an edge of $G$ if and only if $H_u$ and $H_v$ intersect in at least one vertex. The mapping $v \mapsto H_v$ is called a \defn{region representation (of $G$ over $H$)}. In this language, interval graphs are the class of graphs which admit region representations over paths. 

Because of their characterization as precisely the graphs which admit region representations over paths, interval graphs can be reasonably understood as graphs which are ``path-like.'' We are interested in graphs which are ``locally path-like,'' so let us next understand what we mean by ``locally.'' Given a graph $G$ and a vertex $v$ of $G$, the \defn{ball of radius $r/2$ around $v$}, denoted \defn{$B_G(v, r/2)$}, is the subgraph of $G$ with vertex set consisting of the vertices of distance at most $\lfloor r/2 \rfloor$ from $v$, and edge set consisting of the edges $xy$ of $G$ such that $d(v, x) + d(v, y) < r$. A graph $G$ is \defn{$r$-locally interval} for an integer $r \geq 0$ if all its $r/2$-balls $B_G(v, r/2)$ are interval graphs. 

What kind of graphs might be locally interval? Since every interval graph has a region representation over a path, it seems sensible to look for graphs that have a region representation over a graph that is ``locally a path.'' Therefore, a good guess seems to be that locally interval graphs are exactly those graphs which have region representations over a (long) cycle. These form a well-known graph class, the \emph{circular-arc} graphs. A graph $G$ is \defn{circular-arc} if it has a region representation $v \mapsto C_v$  over a cycle $C$ such that the regions $C_v$ are paths.
We refer to such region representations over cycles as \defn{circular-arc representations}. A circular-arc representation $v \mapsto C_v$ of a graph $G$ over a cycle $C$ is \defn{$r$-acyclic} if for every vertex-set $X \subseteq V(G)$ of size at most $r$, the subgraph $\bigcup_{x \in X} C_x$ of $C$ is acyclic. We call a graph $G$ an \defn{$r$-acyclic} circular-arc graph if $G$ has an $r$-acyclic circular-arc representation. 
Note that $\infty$-acyclic circular-arc graphs are precisely interval graphs, because $\bigcup_{v \in V(G)} C_v$ is then a path.

With these definitions, the following intuitive statement, conjectured by Pawe\l\ Rz\k{a}\.{z}ewski in private communication, seems morally true: 
\begin{conjecture}[Rz\k{a}\.{z}ewski 2024]\label{conj:loc-I-is-CAG}
    Locally interval graphs are exactly circular-arc graphs.
\end{conjecture}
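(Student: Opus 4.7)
The plan is to prove the two directions of the equivalence separately. The forward direction---that every circular-arc graph is locally interval---should be essentially immediate from the local-to-path nature of cycles. Given a circular-arc representation $v \mapsto C_v$ of $G$ over a cycle $C$ and a vertex $v \in V(G)$, the arcs $C_u$ corresponding to the vertices $u$ in a sufficiently small ball $B_G(v, r/2)$ collectively occupy a proper subpath $P$ of $C$ whenever $C$ is long enough relative to $r$, and the restriction of the representation to $P$ gives an interval representation of $B_G(v, r/2)$. When $C$ is short compared to $r$, one checks that $G$ is itself an interval graph, hence trivially locally interval.

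For the harder reverse direction, my strategy is constructive, in the spirit of the local covering approach developed for locally chordal graphs. First, for each vertex $v$ of $G$, I would select an interval representation $\phi_v$ of the ball $B_G(v, r/2)$ on some path $P_v$. For two adjacent vertices $u, v$, the balls $B_G(u, r/2)$ and $B_G(v, r/2)$ overlap in a large common subgraph, so the restrictions of $\phi_u$ and $\phi_v$ to this overlap should agree up to reversal and the symmetries inherent to interval representations. I would then glue the paths $P_v$ along these identifications to produce a global host graph $H$, argue that $H$ is either a single path or a cycle, and amalgamate the $\phi_v$ to obtain a circular-arc (or interval) representation of $G$ over $H$.

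The main obstacle lies in aligning the local representations: interval representations are not unique, and a priori there is no canonical choice of $\phi_v$ that matches across edges. Resolving this requires a careful analysis of the degrees of freedom in interval representations, in the style of PQ-trees, in order to identify a subclass of ``compatible'' local representations that can be chosen simultaneously. Once compatibility is secured, one must verify that the global host $H$ is finite---ruling out an infinite-path unfolding by a finite-quotient argument---and that the resulting representation is faithful, in the sense that non-adjacent vertices of $G$ are not sent to overlapping arcs; this latter step should again reduce to the $r$-local interval hypothesis provided $r$ is sufficiently large. I expect the technical heart of the proof to lie in this compatibility analysis and in controlling the global topology of the gluing, which is precisely where the machinery developed in the locally chordal setting is expected to carry through.
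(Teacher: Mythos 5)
Your proposal is a plan rather than a proof, and the step you yourself flag as ``the technical heart'' --- choosing interval representations $\phi_v$ of the balls that are pairwise compatible on overlaps, and controlling the global topology of the glued host --- is exactly where all the difficulty sits and is left entirely open. Interval representations are far from unique (the PQ-tree of a ball can admit many inequivalent orderings), adjacent balls need not induce representations that agree on their intersection even up to reversal, and nothing in your sketch rules out the gluing producing a host that is neither a path nor a cycle. Moreover, the statement you are proving is \emph{false} for $r\le 3$: the long claw (\cref{fig:LongClaw}) is $3$-locally interval but not circular-arc, so any correct argument must use $r\ge 4$ in an essential way; your ``provided $r$ is sufficiently large'' does not locate this threshold, and the extra forbidden subgraphs of \cref{fig:Forb-Ind-Sub-4-ACA} that appear only at $r=4$ in \cref{main:thm:Char-Forb-Ind-Sub-r-ACA}\,(iii) show the boundary case is genuinely delicate. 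Your easy direction also has a concrete error as stated: for an $r$-acyclic circular-arc representation the arcs of the vertices in a ball need \emph{not} occupy a proper subpath of $C$ --- already for $G=C_5$ with $r=4$ and the tautological representation, the arcs of the five vertices of $B_G(v,2)$ cover all of $C$, yet the ball is $P_5$ and is interval. One must instead unroll the cycle (or argue via forbidden subgraphs), not merely restrict the representation.

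The paper takes a completely different, non-constructive route: it never builds a representation by hand, but instead chains together forbidden-induced-subgraph characterizations. It shows $r$-locally interval is equivalent to being $r$-chordal, wheel-free and excluding the Lekkerkerker--Boland obstructions of \cref{fig:Ford-Ind-Sub-Interval} (plus \cref{fig:Forb-Ind-Sub-4-ACA} when $r=4$), then invokes Cao--Grippo--Safe (\cref{thm:Char-Forb-Ind-Sub-N-H-CA}) to get a $3$-acyclic circular-arc representation for free, and finally upgrades $3$-acyclicity to $r$-acyclicity using the locally chordal machinery (\cref{thm:AK-thm5}); the local cover enters only through \cref{thm:Char-Forb-Ind-Sub-Interval} applied to $G_r$, not through any gluing of local representations. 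If you want to salvage your approach, the honest comparison is that the hard amalgamation you defer is precisely what those cited theorems already encapsulate; without carrying it out, your argument does not establish the claim.
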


We make this conjecture precise via our definitions above. Specifically, we ask: are the $r$-locally interval graphs precisely the $r$-acyclic circular-arc graphs? It turns out that this is false for $r \leq 3$, but we prove that it is true for $r \geq 4$:

\begin{theorem}
    Let $G$ be a finite connected graph and $r \geq 4$ an integer. Then, $G$ is $r$-locally interval if and only if $G$ is $r$-acyclic circular-arc.
\end{theorem}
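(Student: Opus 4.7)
The approach is to prove the two directions of the equivalence separately, with Direction 1 (circular-arc to locally interval) as a warm-up and Direction 2 (locally interval to circular-arc) as the main content.

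\textbf{Direction 1.} Given an $r$-acyclic circular-arc representation $v \mapsto C_v$ of $G$ over a cycle $C$ and any vertex $u$, I show that $B := B_G(u, r/2)$ is an interval graph. The key claim is that the total arc-union $U := \bigcup_{v \in V(B)} C_v$ is a proper subset of $C$; once this is established, cutting $C$ at a point outside $U$ yields a path $P$ and the restricted representation becomes an interval representation of $B$. To prove the claim, for each $v \in V(B)$ I fix a shortest $u$-$v$ path, whose arc-union is a connected subgraph of $C$ with at most $\lfloor r/2 \rfloor + 1 \le r$ arcs and hence, by $r$-acyclicity, a proper subpath of $C$ containing $C_u$. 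If $U = C$, then two vertices $v^+, v^- \in V(B)$ whose associated BFS-paths extend $C_u$ maximally in opposite cyclic directions would together cover $C$; concatenating these paths through $u$ produces a walk of length at most $2\lfloor r/2 \rfloor \le r$ whose vertex-set, after careful accounting using $r \ge 4$, has size at most $r$, and whose arcs cover $C$, directly contradicting $r$-acyclicity.

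\textbf{Direction 2.} The strategy is to encode the locally interval structure as a globally coherent \emph{clique-cycle}. Since $r \ge 4$, each ball $B_G(v, r/2)$ is interval and so has a clique-path decomposition: a linear ordering of its maximal cliques in which every vertex of the ball appears in a contiguous block. I then assemble these local clique-paths into a global cyclic ordering of the maximal cliques of $G$ in which every vertex of $G$ appears in a contiguous arc; from this I read off a circular-arc representation (with $C$ the cycle indexed by maximal cliques and $C_v$ the arc of cliques containing $v$) and verify $r$-acyclicity by invoking Direction 1 on the subrepresentations restricted to balls.

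\textbf{Main obstacle.} The critical difficulty is the gluing in Direction 2: assembling local interval representations into a globally consistent cyclic structure. Interval representations are unique only up to reflection, so one must track orientation choices consistently across overlapping balls, which is a problem of covering-space flavor. Conceptually, the locally interval hypothesis induces a local combinatorial system on $G$ whose ``monodromy'' is either trivial---yielding an interval (equivalently, $\infty$-acyclic circular-arc) representation---or a single cyclic twist---yielding a genuine circular-arc representation. The threshold $r \ge 4$ is essential here: it supplies the minimal local rigidity (any two overlapping 2-balls agree up to reflection) needed for the global gluing to be well-defined, paralleling the tree-gluing techniques developed for locally chordal graphs that this paper builds upon.
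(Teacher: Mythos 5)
Your Direction~1 rests on the claim that for every vertex $u$ the union $U=\bigcup_{v\in V(B)}C_v$ of the arcs of the ball $B=B_G(u,r/2)$ is a proper (in particular acyclic) part of $C$, so that the representation can be cut open. This claim is false for even $r$, including the boundary case $r=4$. Take $G=C_{r+1}$ with host cycle $C=C_{r+1}$ and $C_{v_i}$ the single edge $\{i,i+1\}$ of $C$: any $r$ of the $r+1$ edges of $C$ form an acyclic subgraph, so this representation is $r$-acyclic, yet $B_G(v_0,r/2)$ contains every vertex and $U=C$. There is no point (or edge) of $C$ to cut at, and restricting the representation to $V(B)$ represents the induced subgraph $G[V(B)]=C_{r+1}$, not the ball $B$ (which, for even $r$, omits the edge between the two antipodal vertices and is a Hamiltonian path, hence interval for a reason your argument does not capture). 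The ``careful accounting'' you defer to cannot succeed, since for even $r$ the two concatenated BFS-paths genuinely need $r+1$ vertices and, as the example shows, the conclusion they are meant to establish is simply not true. The same example also shows that balls are not induced subgraphs for even $r$, a second point your restriction argument ignores; the paper's \cref{lem:IntervalIsLocallyInterval} is devoted precisely to this subtlety and resolves it via the locally chordal machinery (neighbourhoods of boundary vertices are cliques), and the paper obtains \cref{item:r-ACA:r-ACA}$\to$\cref{item:r-ACA:locInt} by passing through the forbidden-subgraph condition and the local cover rather than by cutting the representation.

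Direction~2, the substantive implication, is only a plan, and its load-bearing step is not just unproved but based on a false premise. You assert that $r\ge 4$ gives enough ``local rigidity'' that overlapping balls determine each other's clique orderings up to reflection; but maximal-clique orderings of an interval graph are in general far from unique up to reflection (already $K_{1,3}$ admits three essentially different clique-path orderings), so the monodromy you want to read off is not well defined without substantial additional work on which orderings are forced by the overlaps. Nothing in the proposal addresses how to choose compatible orderings, why the resulting cyclic order makes every $C_v$ a contiguous arc, or why the outcome is $r$-acyclic (the appeal back to Direction~1 for that last point inherits the gap above). The paper avoids all of this by a different route: it shows an $r$-locally interval graph excludes the Lekkerkerker--Boland graphs and the Cao--Grippo--Safe list (\cref{thm:Char-Forb-Ind-Sub-Interval,thm:Char-Forb-Ind-Sub-N-H-CA}), concluding that $G$ is $3$-acyclic circular-arc, and then upgrades $3$-acyclicity of that representation to $r$-acyclicity using $r$-local chordality via \cref{thm:AK-thm5}. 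If you want a self-contained gluing proof you would essentially have to reprove those ingredients, and in particular the extra forbidden subgraphs of \cref{fig:Forb-Ind-Sub-4-ACA} for $r=4$, which your sketch never encounters.
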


Circular-arc graphs are widely studied. Recently, Cao, Grippo, and Safe \cite{ForbIndSubNHCAG} gave a forbidden induced subgraph characterization of 3-acyclic circular-arc graphs; see \cref{thm:Char-Forb-Ind-Sub-N-H-CA}. 
Building on their work, we give a forbidden induced subgraph characterization of $r$-locally interval graphs for $r \geq 3$.

\subsection{Connections to local-global theory}
In addition to addressing an intuitive and visually appealing idea, our study of locally interval graphs connects to a broader study of local-global graph theory. 
This line of study began with the work of Diestel, Jacobs, Knappe, and Kurkofka \cite{canonicalGD}, which introduced an idea called the \emph{$r$-local covering} to study the ``$r$-local'' structure of a graph. Given a graph $G$ and an integer $r \geq 1$, the \defn{$r$-local covering of $G$}, denoted \defn{$p_r : G_r \to G$}, is the universal covering of $G$ that preserves its $r/2$-balls. (See \cref{subsec:loc-cover} for the formal definition.) The graph $G_r$ is called the \defn{$r$-local cover of $G$}. 

In \cites{LocallyChordal, localGlobalChordal}, the first and fourth authors and Kobler studied \emph{locally chordal graphs}. A graph is \defn{chordal} if each of its cycles of length at least four has a chord. Equivalently, a graph $G$ is chordal if and only if $G$ admits a region representation over a tree. Since interval graphs are the graphs that admit region representations over paths, interval graphs are a subclass of chordal graphs. A graph is \defn{$r$-locally chordal} if each of its $r/2$-balls is chordal. 

The following theorem characterizes locally chordal graphs by their $r$-local covers, by forbidden induced subgraphs, and by region representations. Formal definitions for statements (iii) and (iv) below can be found in \cref{subsec:locally-chordal}.

\begin{theorem}[\cite{LocallyChordal}, Theorem 1 and \cite{localGlobalChordal}, Theorem 1]
\label{thm:locally-chordal}
    Let $G$ be a graph and let $r \geq 3$ be an integer. The following are equivalent: 
    \begin{enumerate}
        \item $G$ is $r$-locally chordal. 
        \item The $r$-local cover $G_r$ of $G$ is chordal.
        \item $G$ is $r$-chordal and wheel-free. 
        \item $G$ is an $r$-acyclic region intersection graph. 
    \end{enumerate}
\end{theorem}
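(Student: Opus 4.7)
The plan is to establish the cycle $(iv) \Rightarrow (i) \Rightarrow (iii) \Rightarrow (iv)$ and then $(i) \Leftrightarrow (ii)$ via the covering theory of \cite{canonicalGD}.

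For $(iv) \Rightarrow (i)$: given an induced cycle $C$ of length $\ell \geq 4$ in a ball $B_G(v, r/2)$, the $r$-acyclic hypothesis applied to $V(C)$ makes $G[V(C)]$ a region intersection graph over a forest, hence chordal, contradicting that $C$ is induced of length $\geq 4$. The technical point to handle is that induced cycles in the ball need not be induced in $G$, because the ball drops edges $xy$ with $d(v,x)+d(v,y) \geq r$; any missing chord must therefore join vertices both at distance exactly $r/2$ from $v$, and a short case analysis on such chords (together with a length-bounding argument for $\ell > r$) completes the step. For $(i) \Rightarrow (iii)$: an induced cycle of length $\ell \in [4,r]$ lies in the radius-$\lfloor\ell/2\rfloor$ ball at any of its vertices, and an induced wheel lies in the radius-$r/2$ ball at its hub; both contradict local chordality.

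The heart of the theorem is $(iii) \Rightarrow (iv)$, which I expect to be the main obstacle. Since $G$ is $r$-chordal, every induced subgraph on at most $r$ vertices is already chordal (an induced cycle on $\leq r$ vertices would have length in $[4,r]$, which is forbidden) and hence admits a clique-tree decomposition. The task is to assemble these local tree decompositions into a single global host graph $H$ together with a coherent region assignment $v \mapsto H_v$ such that any $r$ regions have acyclic union in $H$. The wheel-free hypothesis should be the crucial ingredient here, because wheels are the natural rotational obstruction to gluing clique-trees around a shared vertex; in their absence, I would expect a canonical, tree-like patching to go through. A natural candidate for $H$ is the inverse limit of clique-trees of all radius-$r/2$ neighborhoods, with consistency on overlaps enforced by the absence of wheels, perhaps pinned down by a choice of base vertex.

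Finally, $(i) \Leftrightarrow (ii)$ rests on the fact that the $r$-local cover $p_r : G_r \to G$ preserves $r/2$-balls by its defining universal property, so balls of $G_r$ are isomorphic to balls of $G$ and the local chordality property transfers in both directions. The non-trivial content is that $r$-local chordality of $G_r$ actually forces \emph{global} chordality of $G_r$: any induced cycle in $G_r$ that does not fit in a radius-$r/2$ ball would, by universality of $p_r$, project to a cycle structure in $G$ ruled out by $r$-chordality or wheel-freeness (using the already-established $(i) \Rightarrow (iii)$).
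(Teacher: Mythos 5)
The paper does not prove this theorem at all: it is imported verbatim as background from \cite{LocallyChordal} and \cite{localGlobalChordal} (their respective Theorems 1), so there is no in-paper proof to compare your attempt against. Judged on its own terms, your proposal is a reasonable roadmap, but it leaves the two genuinely hard implications as declared intentions rather than arguments. For (iii)$\Rightarrow$(iv), which you correctly identify as the heart of the matter, you propose to glue clique-trees of the $r/2$-balls into a global host via an ``inverse limit \dots with consistency on overlaps enforced by the absence of wheels.'' That consistency is precisely where all the work lies: you give no reason why the clique-trees of overlapping balls agree on their intersection, no description of the index category of the inverse limit, and no argument that the resulting $H$ is connected and that the regions $H_v$ are connected with $r$-wise acyclic unions. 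Wheel-freeness entering as ``the natural rotational obstruction'' is a heuristic, not a proof. This direction is the main theorem of the cited papers and cannot be dispatched in a sentence.

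The globalization step in (i)$\Rightarrow$(ii) also has a concrete flaw as stated. You argue that an induced cycle of $G_r$ not fitting in a radius-$r/2$ ball ``would project to a cycle structure in $G$ ruled out by $r$-chordality or wheel-freeness.'' But $G$ satisfying (iii) may well contain induced cycles of length greater than $r$ --- take $G=C_{100}$ with $r=4$, which is $4$-chordal and wheel-free --- so projecting a long induced cycle of $G_r$ to $G$ produces nothing forbidden. The actual content is that such cycles do not close up in the $r$-local cover: $G_r$ has no induced cycles of length exceeding $r$, which is a substantive structural property of $p_r$ (following from its universality among $r/2$-ball-preserving coverings), not a consequence of ball-preservation alone. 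Finally, in (iv)$\Rightarrow$(i) the ``length-bounding argument for $\ell>r$'' is exactly the case where $r$-acyclicity does not apply directly to $V(C)$, and it needs to be supplied, not merely named. Until these three points are filled in, the proposal is an outline, not a proof.
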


Our main result in this paper is an analogous theorem for locally interval graphs, giving characterizations by their local covers, by forbidden induced subgraphs, and as circular-arc graphs.  

\begin{mainresult}\label{main:thm:Char-Forb-Ind-Sub-r-ACA}
    Let $G$ be a finite connected graph and $r \geq 4$ an integer.
    Then the following are equivalent:
    \begin{enumerate}
        \item \label{item:r-ACA:locInt} $G$ is $r$-locally interval.
        \item \label{item:r-ACA:locCov} The $r$-local cover $G_r$ of $G$ is interval.
        \item \label{item:r-ACA:ForbInd} $G$ is $r$-chordal, wheel-free, and contains none of the graphs depicted in \cref{fig:Ford-Ind-Sub-Interval} as an induced subgraph. If $r = 4$, then $G$ also contains none of the graphs depicted in \cref{fig:Forb-Ind-Sub-4-ACA} as an induced subgraph.
        \item \label{item:r-ACA:r-ACA} $G$ is $r$-acyclic circular-arc.
    \end{enumerate}
\end{mainresult}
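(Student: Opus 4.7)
The plan is to prove the theorem by establishing (i) $\Rightarrow$ (ii) $\Rightarrow$ (iv) $\Rightarrow$ (i) as a cycle of equivalences, with (iii) treated separately via a forbidden-subgraph argument. Throughout, \cref{thm:locally-chordal} provides the scaffolding: since every $r$-locally interval graph is $r$-locally chordal, $G$ is automatically $r$-chordal and wheel-free, and the $r$-local cover $G_r$ is chordal.

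For (iv) $\Rightarrow$ (i), I fix an $r$-acyclic circular-arc representation $v \mapsto C_v$ over a cycle $C$ and a vertex $v_0$. The core step is to show that the arcs $\{C_w : w \in V(B_G(v_0, r/2))\}$ do not jointly cover $C$. Otherwise, a minimal covering subfamily would be indexed cyclically by vertices forming an induced cycle of length more than $r$ in $G$ (by $r$-acyclicity), and this can be ruled out using BFS layers in the ball together with the length bound on shortest paths from $v_0$. Once an uncovered point $p \in C$ is identified, cutting $C$ at $p$ produces a path on which the restricted arcs form an interval representation of the ball; in the even-$r$ case some care is needed so that the ball's edge set (which omits certain induced edges between distance-$r/2$ vertices) is correctly realized.

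For (i) $\Rightarrow$ (ii) $\Rightarrow$ (iv), I proceed in two halves. By \cref{thm:locally-chordal} and the ball-preserving property of $p_r\colon G_r \to G$, the cover $G_r$ is chordal and $r$-locally interval; to upgrade this to $G_r$ being globally interval, I would show $G_r$ is asteroidal-triple-free, arguing that any asteroidal triple together with its connecting paths can be enclosed in a controlled union of overlapping $r/2$-balls, each interval, so that the local interval structure obstructs the asteroidal configuration. For the passage from (ii) to (iv), start with an interval representation of $G_r$ on a path $P$ and show that the deck transformation group $\Gamma$ of $p_r$ acts on $P$ by translations, using near-uniqueness of interval representations (via PQ-trees) together with the ball-preservation of $p_r$. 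Then $\Gamma$ is infinite cyclic, the quotient $P/\Gamma$ is a cycle $C$, and the interval representation descends to a circular-arc representation of $G$ whose $r$-acyclicity follows from the fundamental-domain length exceeding $r$.

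For (i) $\Leftrightarrow$ (iii), the forward direction is a direct verification that each forbidden subgraph in \cref{fig:Ford-Ind-Sub-Interval} (and in \cref{fig:Forb-Ind-Sub-4-ACA} when $r = 4$) is not $r$-locally interval, and the converse follows by combining the just-established (i) $\Leftrightarrow$ (iv) with the forbidden-induced-subgraph characterization of $3$-acyclic circular-arc graphs from \cite{ForbIndSubNHCAG}, handling the sporadic configurations at $r = 4$ separately. The hardest step is expected to be (ii) $\Rightarrow$ (iv): producing a $\Gamma$-equivariant interval representation of $G_r$ and then passing to the quotient while preserving $r$-acyclicity requires a delicate use of the uniqueness structure of interval representations together with the precise geometry of the ball-preserving cover.
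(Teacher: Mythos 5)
Your route is genuinely different from the paper's: the paper never argues directly with circular-arc or interval representations at all, but instead reduces everything to \cref{item:r-ACA:ForbInd} and lets three black boxes do the work --- the Lekkerkerker--Boland characterization (\cref{thm:Char-Forb-Ind-Sub-Interval}), the Cao--Grippo--Safe characterization (\cref{thm:Char-Forb-Ind-Sub-N-H-CA}) together with \cref{thm:AK-thm5} to upgrade $3$-acyclicity to $r$-acyclicity, and the observation that every chordal forbidden subgraph for interval graphs fits inside a single $4/2$-ball around a suitable vertex. In particular the implication \cref{item:r-ACA:ForbInd}$\to$\cref{item:r-ACA:locCov} is where the paper gets $G_r$ interval: $G_r$ is chordal by \cref{thm:locally-chordal}, and any copy of a graph from \cref{fig:Ford-Ind-Sub-Interval} in $G_r$ lives in a $4/2$-ball and hence projects (essentially isomorphically) into $G$, where it is forbidden. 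Your proposal replaces all of this with covering-space geometry, which is more self-contained in principle but leaves the two hardest implications only sketched.

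The genuine gap is in your step \cref{item:r-ACA:locInt}$\Rightarrow$\cref{item:r-ACA:locCov}. You propose to show $G_r$ is asteroidal-triple-free by ``enclosing an asteroidal triple and its connecting paths in a controlled union of overlapping $r/2$-balls, each interval, so that the local interval structure obstructs the asteroidal configuration.'' As stated, this argument uses nothing about $G_r$ beyond its being $r$-locally interval, so it would apply verbatim to $G$ itself and prove that every $r$-locally interval graph is chordal and AT-free, hence interval --- which is false (a cycle of length greater than $r$ is $r$-locally interval, is not AT-free, and is not interval). Any correct version of this step must invoke the universal property of the $r$-local covering, i.e.\ that $G_r$ has no essential short cycles, and your sketch does not say where or how that enters; moreover, gluing interval balls along overlaps does not in general yield an interval graph, so the ``controlled union'' mechanism needs an actual lemma behind it. A secondary concern is \cref{item:r-ACA:locCov}$\Rightarrow$\cref{item:r-ACA:r-ACA}: you assert the deck transformation group acts on an interval representation of $G_r$ by translations and is infinite cyclic, but you must also handle the case $G_r=G$ (trivial deck group, where the quotient is not a cycle and one instead embeds the path into a long cycle directly), justify regularity of the covering, rule out orientation-reversing deck transformations, and prove equivariance of the representation; none of these is routine from ``near-uniqueness via PQ-trees'' alone. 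By contrast, the paper's detour through \cref{item:r-ACA:ForbInd} avoids both difficulties entirely, at the cost of relying on the cited classification theorems.
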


\section{Background}

\subsection{Locally chordal graphs}\label{subsec:locally-chordal} 
We begin by giving the definitions necessary to understand statements (iii) and (iv) of \cref{thm:locally-chordal}. A graph is \defn{$r$-chordal} if each of its cycles of length at least four and at most $r$ has a chord. Equivalently, a graph is $r$-chordal if it does not contain an induced cycle of length $\ell$ with $4 \leq \ell \leq r$. A \defn{wheel $W_n$} for $n \geq 4$ is a graph consisting of a cycle $C_n$ of length $n$ and an additional vertex $v$ that is adjacent to every vertex of $C_n$. A graph is \defn{wheel-free} if it does not contain a wheel $W_n$ for $n \geq 4$ as an induced subgraph. Observe that every chordal graph is $r$-chordal and wheel-free. The equivalence between (i) and (iii) in \cref{thm:locally-chordal} states that being $r$-chordal and wheel-free is equivalent to being $r$-locally chordal. 

 A \defn{region} of a graph $G$ is a connected subgraph of $G$. A family of regions $\bH$ of a graph $H$ is \defn{Helly} if it satisfies the \defn{Helly property}: every finite subset of pairwise intersecting regions of $\bH$ has nonempty intersection. A region representation $v \mapsto H_v$ of a graph $G$ over a graph $H$ is \defn{$r$-acyclic} if for every set $X \subseteq V(G)$ of size at most $r$, the subgraph $\bigcup_{x \in X} H_x$ of $H$ is acyclic. A graph $G$ is an \defn{$r$-acyclic region intersection graph} if $G$ admits an $r$-acyclic region representation.

Next, we give two statements from \cite{localGlobalChordal} that we will use in the proof of our main result. 

\begin{lemma}[\cite{localGlobalChordal}, Lemma 3.6]\label{lem:AK3.6}
    Let $\bH$ be a family of regions of a (possibly infinite) graph $H$. Then $\bH$ is 3-acyclic if and only if $\bH$ is 2-acyclic and Helly. 
\end{lemma}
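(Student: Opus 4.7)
The plan is to lean on the classical \emph{subtree Helly property}: a finite family of pairwise intersecting subtrees of a tree has a common vertex. First I would observe that 2-acyclicity applied to a singleton forces every $H \in \bH$ to be a tree (connected and acyclic), and applied to a pair forces $H_i \cup H_j$ to be a forest, so the intersection $H_i \cap H_j$, as the intersection of two subtrees in a forest, is either empty or itself a subtree.

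For the forward direction, 2-acyclicity of $\bH$ is immediate from 3-acyclicity. To prove Helly, I would take a finite pairwise intersecting subfamily $H_1, \dots, H_n \in \bH$, fix $H_1$, and consider the subtrees $A_i := H_1 \cap H_i$ of $H_1$ for $i \geq 2$. Each $A_i$ is nonempty. For any $i, j \geq 2$, 3-acyclicity will make $H_1 \cup H_i \cup H_j$ an acyclic, connected (as a union of pairwise intersecting connected subgraphs) subgraph, hence a tree; the subtree Helly property applied to $H_1, H_i, H_j$ will then yield $A_i \cap A_j = H_1 \cap H_i \cap H_j \neq \emptyset$. A second application of the subtree Helly property, inside the tree $H_1$ to the pairwise intersecting family $\{A_i\}_{i \geq 2}$, will produce a vertex common to every $A_i$, hence to every $H_i$.

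For the reverse direction, assuming $\bH$ is 2-acyclic and Helly, I would show that $H_1 \cup H_2 \cup H_3$ is a forest for every triple $H_1, H_2, H_3 \in \bH$. The key tool will be the \emph{gluing principle}: if $G_1, G_2$ are both forests and $G_1 \cap G_2$ is empty or connected, then $G_1 \cup G_2$ is a forest (by a standard inclusion--exclusion count of vertices and edges). By 2-acyclicity, $H_1 \cup H_2$ is already a forest, so it suffices to glue $H_3$ onto it. The relevant intersection is $(H_1 \cup H_2) \cap H_3 = (H_1 \cap H_3) \cup (H_2 \cap H_3)$, a union of two subtrees of $H_3$. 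If at most one summand is nonempty, the gluing principle will apply directly. Otherwise $H_1, H_2, H_3$ pairwise intersect, and Helly will produce a common vertex $v$; since $v$ lies in both $H_1 \cap H_3$ and $H_2 \cap H_3$, their union is a connected subtree of $H_3$, and the gluing principle will again yield that $H_1 \cup H_2 \cup H_3$ is a forest.

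The main obstacle I expect will be isolating precisely where Helly enters the reverse direction: the dangerous configuration is when all three regions pairwise overlap but the two pieces $H_1 \cap H_3$ and $H_2 \cap H_3$ inside $H_3$ are disjoint, since a path joining them inside $H_3$ together with a path through $H_1 \cup H_2$ would close a cycle. The Helly property is exactly what rules out this disconnection; once recognized, the remainder reduces to the routine cyclomatic accounting packaged into the gluing principle.
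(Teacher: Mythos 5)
A preliminary remark: the paper does not prove this lemma itself but imports it from \cite{localGlobalChordal}, so there is no in-paper proof to compare against; I can only assess your argument on its own terms. Your overall strategy --- the subtree Helly property for the forward direction and a gluing principle for forests for the reverse direction --- is sound, and the forward direction is complete: $1$- and $2$-acyclicity make each region a tree and each nonempty pairwise intersection a subtree, $3$-acyclicity makes $H_1\cup H_i\cup H_j$ a tree so that the median argument gives $H_1\cap H_i\cap H_j\neq\emptyset$, and a second application of the subtree Helly property inside $H_1$ finishes. Two minor points should be made explicit: the subtree Helly property must be invoked for finite families of subtrees of a \emph{possibly infinite} tree (it still follows from the median argument by induction), and your ``inclusion--exclusion count'' for the gluing principle is literally a count only for finite graphs, so for infinite $H$ you should reduce to a finite subgraph containing the putative cycle together with a finite connected piece of $G_1\cap G_2$ joining the relevant attachment vertices.

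There is, however, a genuine hole in the case analysis of the reverse direction. After writing $(H_1\cup H_2)\cap H_3=(H_1\cap H_3)\cup(H_2\cap H_3)$ you treat two cases: at most one summand nonempty, or ``otherwise $H_1,H_2,H_3$ pairwise intersect.'' The second inference is false: both summands can be nonempty while $H_1\cap H_2=\emptyset$. In that situation the Helly property says nothing (the triple is not pairwise intersecting), and your chosen decomposition really does fail, since $(H_1\cap H_3)\cup(H_2\cap H_3)$ then consists of two disjoint nonempty subtrees of $H_3$ and is disconnected, so the gluing principle cannot be applied with $G_1=H_1\cup H_2$ and $G_2=H_3$. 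The fix is a one-line regrouping: if some pair among the three regions is disjoint, relabel so that $H_1\cap H_2=\emptyset$ and instead glue $H_2$ onto the forest $H_1\cup H_3$; their intersection equals $(H_1\cap H_2)\cup(H_3\cap H_2)=H_2\cap H_3$, which is empty or connected, so the gluing principle applies. The Helly property is then needed only in the remaining case where all three pairwise intersections are nonempty, exactly as you use it. With that correction, and the two minor points above, the proof is complete.
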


\begin{theorem}[\cite{localGlobalChordal}, Theorem 5]
\label{thm:AK-thm5}
    Let $v \mapsto H_v$ be a 3-acyclic region representation of a (possibly infinite) graph $G$ over a (possibly infinite) graph $H$ and let $r > 3$ be an integer. Then, $G$ is $r$-locally chordal if and only if $v \mapsto H_v$ is $r$-acyclic. 
\end{theorem}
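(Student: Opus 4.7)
The forward direction is essentially a consequence of \cref{thm:locally-chordal}: if $v \mapsto H_v$ is $r$-acyclic, then $G$ admits an $r$-acyclic region representation, hence is an $r$-acyclic region intersection graph, and by (iv) $\Rightarrow$ (i) of \cref{thm:locally-chordal} is $r$-locally chordal.

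For the reverse direction, I would argue by contradiction. Assume $G$ is $r$-locally chordal but the given $3$-acyclic representation is not $r$-acyclic, and choose $X \subseteq V(G)$ of minimum size with $|X| \leq r$ such that $\bigcup_{x \in X} H_x$ contains a cycle $Z$ in $H$. Since the representation is $3$-acyclic, $|X| \geq 4$. The plan is to extract from $(X, Z)$ an induced cycle of length between $4$ and $r$ in $G$, contradicting the $r$-chordality of $G$, which holds by (i) $\Rightarrow$ (iii) of \cref{thm:locally-chordal}.

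The heart of the argument is a structural lemma, proved using minimality of $X$ together with \cref{lem:AK3.6}, asserting that the regions $\{H_x : x \in X\}$ behave like arcs around $Z$. Concretely: (a) each $H_x \cap Z$ is a connected subgraph of $Z$, and (b) two regions $H_x, H_y$ with $x,y \in X$ meet in $H$ only if their arcs are cyclically adjacent around $Z$. For both parts, I would suppose the conclusion fails, then \emph{splice} $Z$ along a path running through the offending intersection (using connectedness of the involved region) to produce a new cycle $Z'$ in $H$ contained in $\bigcup_{x \in X'} H_x$ for some proper subset $X' \subsetneq X$, contradicting the minimality of $|X|$. The Helly property guaranteed by \cref{lem:AK3.6} is what keeps pairwise intersections globally coherent and prevents the splicing argument from running into pathologies arising from arcs that partially overlap. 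Given this lemma, the cyclic order of the arcs $H_x \cap Z$ yields a cycle $C$ of length exactly $|X|$ in $G[X]$, and part (b) ensures that $C$ is induced in $G$ (since any chord would come from a non-cyclically-adjacent pair of intersecting regions). As $4 \leq |C| \leq r$, this is the forbidden induced cycle, finishing the proof.

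The step I expect to be the main obstacle is the rerouting itself: one must ensure the splice genuinely reduces the number of regions used, i.e., that $Z'$ avoids at least one $H_y$ with $y \in X$ entirely. The natural device is to pick, for each $x \in X$, an edge $e_x$ of $Z$ lying in $H_x$ but in no other $H_{x'}$ with $x' \in X$ (such edges exist by minimality and are distinct for distinct $x$), and to organize the splice so that at least one $e_y$ is destroyed while the other $e_x$'s are preserved; this forces the spliced cycle to live in $\bigcup_{x \in X \setminus \{y\}} H_x$. No new difficulty should arise in the infinite setting, since once $X$ and $Z$ are fixed the whole rerouting argument is finitary.
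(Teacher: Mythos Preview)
The paper does not contain a proof of \cref{thm:AK-thm5}: it is quoted verbatim from \cite{localGlobalChordal} (their Theorem~5) and used as a black box in the proof of \cref{main:thm:Char-Forb-Ind-Sub-r-ACA}. There is therefore nothing in this paper to compare your proposal against.

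That said, your outline is the natural one and matches the spirit of the argument one finds in the cited source: extract from a minimal bad set $X$ a short induced cycle in $G$ by showing the regions $H_x$ trace consecutive arcs around the cycle $Z$ in $H$. You correctly identify the delicate point. In your part (a), rerouting through $H_x$ when $H_x\cap Z$ is disconnected does not by itself drop a region from $X$, since $H_x$ stays; you must argue that one of the two resulting sub-cycles misses some \emph{other} $e_y$, and this requires knowing that not all the privately-covered edges $e_{x'}$ lie on the same side of the split --- a fact that follows from $|X|\ge 4$ but should be stated explicitly. In part (b), note that ``cyclically adjacent on $Z$'' must be upgraded to ``intersect somewhere in $H$ iff cyclically adjacent on $Z$'', since a chord of the putative cycle in $G$ could come from an intersection of $H_x$ and $H_y$ occurring entirely off $Z$; the Helly property from \cref{lem:AK3.6} is indeed what rules this out, but the reduction is not automatic and deserves a sentence. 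With those two points pinned down your argument goes through.
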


\subsection{Interval graphs}
Boland \& Lekkeikerker~\cite{ForbIndSubInterval} characterized finite interval graphs by forbidden induced subgraphs:

\begin{theorem}[Boland \& Lekkeikerker  \cite{ForbIndSubInterval}, II]\label{thm:Char-Forb-Ind-Sub-Interval}
    Let $G$ be a (possibly infinite)\footnote{Even though Boland and Lekkeikerker prove \cref{thm:Char-Forb-Ind-Sub-Interval} only for finite graphs, Halin~\cite[(8)]{halininterval} showed that an infinite graph is interval if and only if its finite induced subgraphs are interval, which implies \cref{thm:Char-Forb-Ind-Sub-Interval} for infinite graphs.} graph.
    Then the following are equivalent:
    \begin{enumerate}
        \item $G$ is interval.
        \item $G$ is chordal and contains none of the graphs depicted in \cref{fig:Ford-Ind-Sub-Interval} as an induced subgraph.
    \end{enumerate}
\end{theorem}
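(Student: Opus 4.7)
The plan is to argue the two directions separately, first reducing to finite graphs. By Halin's result cited in the footnote, $G$ is interval if and only if every finite induced subgraph of $G$ is interval; moreover, both chordality and the forbidden-subgraph condition are closed under taking induced subgraphs. Hence it suffices to prove the equivalence for finite $G$.

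The forward direction is direct. If $v \mapsto P_v$ is an interval representation of $G$ over a path $P$, then for any induced cycle $v_1 v_2 \dots v_n v_1$ with $n \geq 4$, picking the interval $P_{v_i}$ whose right endpoint comes first along $P$ forces its two cyclic neighbours to contain that endpoint, producing a chord; so $G$ is chordal. For each graph $H$ listed in \cref{fig:Ford-Ind-Sub-Interval}, one then verifies non-intervality directly, typically by exhibiting an \emph{asteroidal triple} in $H$, i.e.\ three vertices such that between any two there is a path avoiding the closed neighbourhood of the third; no interval graph can contain such a triple, since in any interval representation the middle of three pairwise-separated intervals separates the other two.

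For the backward direction, I would proceed via the auxiliary characterisation that a finite chordal graph is interval if and only if it contains no asteroidal triple. This auxiliary statement can be established by taking a chordal graph without asteroidal triple, considering its clique tree, and constructing an interval representation by arranging the maximal cliques along a path; the absence of asteroidal triples is what allows the tree to be straightened into a path without creating spurious intersections. Given this, the remaining task is purely combinatorial: show that every finite chordal graph $G$ containing an asteroidal triple $\{a,b,c\}$ contains one of the graphs in \cref{fig:Ford-Ind-Sub-Interval} as an induced subgraph. The strategy is to fix shortest witnessing paths $P_{ab}, P_{bc}, P_{ca}$ and classify the configurations by which of $N[a], N[b], N[c]$ meet which of these paths, using chordality to kill all induced cycles of length at least four and thereby severely restrict the adjacency pattern.

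The main obstacle is exactly this final case analysis. The forbidden list in \cref{fig:Ford-Ind-Sub-Interval} contains infinite families parametrised by cycle or path lengths, so one must argue not merely that \emph{some} forbidden configuration sits inside a given asteroidal-triple witness, but that an induced copy of a specific family member can be extracted by choosing the right length parameter and verifying all required non-adjacencies. Chordality makes this tractable by ruling out most would-be chords, but the bookkeeping---particularly around configurations where several of the three asteroidal paths have length two or three, so that the ``net''- and ``sun''-like members of the list are at stake---is where the bulk of the work lies.
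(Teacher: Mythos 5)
This theorem is not proved in the paper at all: it is quoted from Boland and Lekkerkerker, with the footnote reducing the infinite case to the finite case via Halin's compactness result. Your opening reduction to finite graphs is exactly the argument the footnote has in mind (and it is correct: chordality and the forbidden-subgraph condition are both inherited by induced subgraphs and both witnessed by finite induced subgraphs, so the equivalence transfers). Your outline of the finite case is likewise the classical route of the cited source: interval $\Rightarrow$ chordal via the leftmost right endpoint on an induced cycle; non-intervality of each graph in \cref{fig:Ford-Ind-Sub-Interval} via asteroidal triples; and, for the converse, the Lekkerkerker--Boland characterisation that a finite chordal graph is interval if and only if it has no asteroidal triple, followed by the extraction of a forbidden subgraph from any asteroidal triple in a chordal graph. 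So the approach is the right one and matches the literature the paper leans on.

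The genuine gap is the one you name yourself: the final step --- given a finite chordal graph with an asteroidal triple, produce an \emph{induced} copy of the long claw, the whipping top, or a member of the $\dag$/$\ddag$ families --- is only described, not carried out, and it is by far the hardest part of the theorem (it occupies the bulk of Lekkerkerker and Boland's original paper). Saying that chordality ``severely restricts the adjacency pattern'' and that one must ``choose the right length parameter'' does not substitute for the case analysis; in particular, minimality of the witnessing paths interacts delicately with which family member appears, and the two infinite families in \cref{fig:dag} and \cref{fig:ddag} cannot be obtained without a careful induction on the structure of the shortest $a$--$c$ path avoiding $N[b]$. The auxiliary claim that chordal plus AT-free implies interval is also only gestured at (``the tree can be straightened into a path''); a clean way to make this precise is via the Gilmore--Hoffman consecutive-cliques criterion, but some argument is needed. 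As a blind reconstruction of a cited classical theorem your sketch identifies all the right ingredients, but as a proof it is incomplete precisely where the theorem is difficult.
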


\begin{figure}[ht]
    \centering
    \begin{subfigure}[b]{0.225\textwidth}
         \centering
         \includegraphics[width=0.6\textwidth]{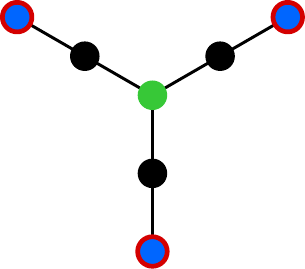}
         \caption{Long claw}
         \label{fig:LongClaw}
     \end{subfigure}
     \hfill
     \begin{subfigure}[b]{0.225\textwidth}
         \centering
         \includegraphics[width=0.75\textwidth]{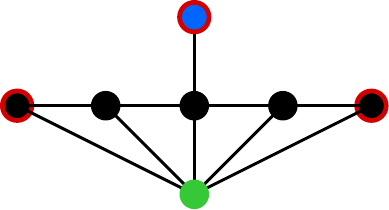}
         \caption{Whipping top}
         \label{fig:WhippingTop}
     \end{subfigure}
     \hfill
     \begin{subfigure}[b]{0.225\textwidth}
         \centering
         \includegraphics[width=0.75\textwidth]{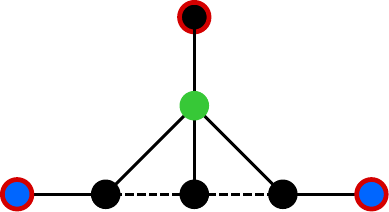}
         \caption{$\dag$}
         \label{fig:dag}
     \end{subfigure}
     \hfill
     \begin{subfigure}[b]{0.225\textwidth}
         \centering
         \includegraphics[width=0.75\textwidth]{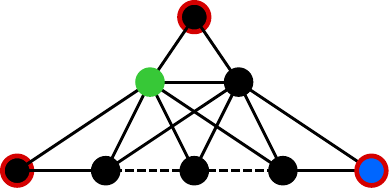}
         \caption{$\ddag$}
         \label{fig:ddag}
     \end{subfigure}
    \caption{Chordal minimal forbidden induced subgraphs of interval graphs. The dotted lines in \cref{fig:dag} and \cref{fig:ddag} represent paths of arbitrary length, except one of the dotted lines in \cref{fig:dag} must be of length at least one.}
    \label{fig:Ford-Ind-Sub-Interval}
\end{figure}

As a sanity check, we show that every interval graph is indeed locally interval. (Observe that this is not immediate from the definition. Every induced subgraph of an interval graph is an interval graph, but when $r$ is even, the $r/2$-balls of a graph are not necessarily induced subgraphs.) 

\begin{lemma}\label{lem:IntervalIsLocallyInterval}
    Every (possibly infinite) interval graph is $r$-locally interval for every integer $r \geq 0$.
\end{lemma}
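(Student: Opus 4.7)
The plan is to construct an interval representation of the ball $B := B_G(u, r/2)$ directly from a given representation $v \mapsto I_v$ of $G$, handling odd and even $r$ separately. For odd $r$ (and for $r \leq 1$) the ball $B$ coincides with the induced subgraph $G[V(B)]$, so simply restricting $v \mapsto I_v$ to $V(B)$ yields an interval representation of $B$. For infinite $G$ I would reduce to the finite case via the Halin-type characterization recalled in the footnote to \cref{thm:Char-Forb-Ind-Sub-Interval}.

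The substantive case is $r = 2k$ with $k \geq 2$, since $k = 0, 1$ give a vertex or a star. Here $B$ equals $G[V(B)]$ with all edges inside the distance-$k$ layer $L_k := \{v : d_G(u, v) = k\}$ deleted. Since each $y \in L_k$ satisfies $I_y \cap I_u = \emptyset$, the subpath $I_y$ lies entirely on one side of $I_u$; write $L_k = L_k^A \sqcup L_k^B$ for this split into a left part and a right part. I would then define $I'_x := I_x$ for every $x$ with $d_G(u,x) \leq k - 1$, collapse each $I_y$ with $y \in L_k^A$ to a single-vertex subpath at its right endpoint $r_y$, and collapse each $I_y$ with $y \in L_k^B$ to a single-vertex subpath at its left endpoint $l_y$. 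By subdividing the host path if necessary, we may arrange all chosen endpoints to be pairwise distinct within $L_k^A$ and within $L_k^B$.

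The central step is the equivalence ``$r_y \in I_x$ if and only if $xy \in E(G)$'' for each $y \in L_k^A$ and each $x$ with $d_G(u,x) \leq k-1$ (with a symmetric statement for $L_k^B$). Writing $A_i := \bigcup_{d_G(u, w) \leq i} I_w = [a_i, b_i]$, which is a subpath of $P$ by connectedness of BFS balls in interval representations, the two relevant inequalities are $r_y < a_{k-2}$ (since $I_y$ is disjoint from $A_{k-2}$) and $r_x \geq a_{k-2}$ (since $I_x \cap A_{k-2} \neq \emptyset$ for any $x$ with $d_G(u,x) \leq k-1$). When $xy \in E(G)$, the overlap $I_x \cap I_y$ is then forced to lie below $a_{k-2}$, giving $l_x \leq r_y$ and hence $r_y \in [l_x, r_x]$; when $xy \notin E(G)$, disjointness of $I_x$ and $I_y$ immediately yields $r_y \notin I_x$. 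Once this equivalence is established, $v \mapsto I'_v$ realizes exactly the edges of $B$: adjacencies within $V_{k-1}$ are unchanged, $V_{k-1}$-to-$L_k$ adjacencies are preserved by the equivalence, and $L_k$-internal edges all vanish because collapsed points on the same side of $I_u$ are distinct by construction, while those on opposite sides lie on opposite sides of $I_u$. The main obstacle is precisely this central equivalence, whose forward direction hinges on showing that any $x$ with $d_G(u,x) \leq k-1$ adjacent to $y \in L_k^A$ is forced by the interval geometry to straddle the threshold $a_{k-2}$ from both sides, a fact that ultimately comes from $x$ being reachable from $V_{k-2}$ in at most one step.
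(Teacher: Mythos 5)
Your construction is correct, but it takes a genuinely different route from the paper. The paper does not build a representation at all: it invokes the Boland--Lekkerkerker forbidden-induced-subgraph characterization (\cref{thm:Char-Forb-Ind-Sub-Interval}) together with the locally-chordal machinery of \cref{thm:locally-chordal}, observes that for even $r$ only the ``red circled'' vertices of the obstructions in \cref{fig:Ford-Ind-Sub-Interval} can lie in the outermost layer $N^d(v)$ (because neighbourhoods of outer-layer vertices are cliques in the ball), and then checks that adding any edge of $G$ between such a pair would create a long induced cycle in the chordal graph $G$. Your argument is instead elementary and constructive: you directly turn an interval representation of $G$ into one of the ball by keeping the intervals of vertices at distance at most $k-1$ and collapsing each outer-layer interval to its endpoint facing $I_u$, with the threshold inequalities $r_y<a_{k-2}\le r_x$ doing exactly the work needed to show that the collapsed point still meets $I_x$ precisely when $xy\in E(G)$; I checked these inequalities and they hold for the reasons you give. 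What the paper's proof buys is brevity given machinery it needs anyway elsewhere; what yours buys is self-containedness (no appeal to \cref{thm:locally-chordal} or to the forbidden-subgraph list and its figure-based case check) and an explicit representation of the ball. Two small points you should tighten: subdividing edges of the host path does not by itself separate two subpaths that share an endpoint \emph{vertex}, so either pass to a representation in which all endpoints are distinct (which exists for finite interval graphs) or say explicitly how you split such a vertex; and in the reduction of the infinite case to the finite one via Halin, a finite induced subgraph of a ball of $G$ is not itself a ball of $G$, so you must pass to the finite induced subgraph of $G$ obtained by adding shortest paths from $u$ to the chosen vertices before applying the finite case.
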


\begin{proof}
    Let $G$ be an interval graph and $r \geq 0$ an integer.
    Since interval graphs are chordal, the graph $G$ is chordal, and so in particular, $G$ is $r$-chordal and wheel-free. Therefore, by \cref{thm:locally-chordal}, $G$ is $r$-locally chordal. 
    Suppose for a contradiction that an $r/2$-ball $B$ in $G$ around a vertex $v$ contains a graph $F$ depicted in \cref{fig:Ford-Ind-Sub-Interval} as induced subgraph.
    If $r$ is odd, then $B$ is an induced subgraph of $G$, and thus the non-interval graph $F$ is also an induced subgraph of $G$.
    Therefore, assume that $r$ is even, and set $d \coloneqq r/2$.
    
    Since $F$ is not an induced subgraph of $G$, some two vertices of $F$ have to lie in $N^d(v)$, where $N^d(v)$ denotes the set of vertices of distance exactly $d$ from $v$.
    By \cref{thm:locally-chordal}, for every vertex $u \in N^d(v)$, its neighborhood $N_B(u)$ is a clique in $B$.
    Thus, only the red circled vertices in \cref{fig:Ford-Ind-Sub-Interval} may be contained in $N^d(v)$.
    It is a simple check that every potential edge of $G$ between some pair of red circled vertices yields an induced cycle of length at least $4$ in $G$, which contradicts that $G$ is chordal.
\end{proof}

\subsection{The local covering}\label{subsec:loc-cover}
Finally, we introduce the $r$-local covering. A \defn{covering} of a connected graph $G$ is a surjective mapping $p: C \to G$ from a graph $C$ to $G$ such that for every vertex $c$ of $C$, the mapping $p$ restricts to an isomorphism from the edges incident to $c$ (in $C$) to the edges incident to $p(c)$ (in $G$). Given an integer $r \geq 1$, a covering $p: C \to G$ is \defn{$r/2$-ball-preserving} if $p$ restricts to an isomorphism from $B_G(c, r/2)$ to $B_G(p(c), r/2)$ for every vertex $c$ of $C$. A covering $p: C \to G$ of $G$ is \defn{universal} among a set of coverings $\mathcal{C}$ of $G$ if for every covering $q: D \to G$ in $\mathcal{C}$ there exists a covering $s: C \to D$ such that $p = q \circ s$. 

The formal definition of the $r$-local covering is given in \cite[\S 4.2]{canonicalGD}. For this paper, we need only its equivalent characterization given by \cite[Lemmas 4.2, 4.3, and 4.4]{canonicalGD}. Given a connected graph $G$ and an integer $r \geq 1$, the \defn{$r$-local covering $p_r: G_r \to G$} is the universal covering among the coverings of $G$ that preserve $r/2$-balls. The graph $G_r$ is the \defn{$r$-local cover} of $G$. Given $G$ and $r\geq 1$, both the $r$-local covering $p_r: G_r \to G$ and the $r$-local cover $G_r$ are unique. 

\section{Proof of Theorem 1}

In this section, we show that for $r\geq 4$, the $r$-locally interval graphs are precisely the $r$-acyclic circular-arc graphs, i.e.\ \cref{conj:loc-I-is-CAG} holds for $r \geq 4$.

Cao, Grippo and Safe characterized the $3$-acyclic circular-arc graphs by forbidden induced subgraphs (\cref{thm:Char-Forb-Ind-Sub-N-H-CA}).
They refer to $3$-acyclic circular-arc graphs as \emph{normal Helly circular-arc graphs}.
In the literature, $2$-acyclic circular-arc graphs are called \defn{normal}.
These types of circular-arc graphs have often been considered when being additionally \emph{Helly}.
By \cref{lem:AK3.6}, the $3$-acyclic circular-arc graphs are precisely the normal Helly circular-arc graphs.

\begin{theorem}[Cao, Grippo \& Safe~\cite{ForbIndSubNHCAG}, Theorem 1]\label{thm:Char-Forb-Ind-Sub-N-H-CA}
    Let $G$ be a finite graph. Then the following are equivalent:
    \begin{enumerate}
        \item $G$ is $3$-acyclic (equivalently: normal Helly) circular-arc.
        \item $G$ is wheel-free (equivalently: $3$-locally chordal) and contains none of the graphs depicted in \cref{fig:Ford-Ind-Sub-Interval,fig:Ford-Ind-Sub-NHCA}  as an induced subgraph.
    \end{enumerate}
\end{theorem}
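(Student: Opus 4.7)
The plan is to prove both directions of the equivalence, identifying ``3-acyclic'' with ``normal Helly'' throughout via \cref{lem:AK3.6}.

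For the forward direction, suppose $G$ has a 3-acyclic circular-arc representation $v \mapsto C_v$ over a cycle $C$. Wheel-freeness should follow because in an induced wheel $W_n$ with hub $h$ and rim $v_1,\ldots,v_n$ ($n \geq 4$), applying 3-acyclicity to each consecutive triple $\{h, v_i, v_{i+1}\}$ forces $C_h \cup C_{v_i} \cup C_{v_{i+1}}$ to be a subpath of $C$, and propagating these constraints around the rim yields either a forbidden chord or a triple of regions covering all of $C$, a contradiction. For each explicit forbidden graph $F$ from \cref{fig:Ford-Ind-Sub-Interval} or \cref{fig:Ford-Ind-Sub-NHCA}, I would check directly, by case analysis on the cyclic arrangement of arcs, that no 3-acyclic circular-arc representation of $F$ can exist; in each case either the Helly property or 2-acyclicity (\cref{lem:AK3.6}) is contradicted.

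For the backward direction, I would construct a 3-acyclic circular-arc representation of $G$ from the assumptions. The chordal case reduces to \cref{thm:Char-Forb-Ind-Sub-Interval}: an interval representation extends to a 3-acyclic circular-arc representation once the host path is embedded as a proper subpath of a cycle. Otherwise, $G$ has an induced cycle of length $\geq 4$, and my aim is to arrange the maximal cliques of $G$ cyclically on some cycle $C'$ so that, for every vertex $v$, the cliques containing $v$ form a consecutive arc $C_v$ of $C'$; the Helly property together with 2-acyclicity would then, via \cref{lem:AK3.6}, upgrade to the desired 3-acyclicity. The core is a structural lemma stating that every wheel-free graph avoiding the forbidden subgraphs admits such a cyclic clique arrangement, proved by enumerating the possible failure modes --- a vertex whose cliques are not consecutive, a set of cliques that cannot be cyclically ordered, or a global twist --- and showing that each one yields either a wheel or one of the listed forbidden subgraphs.

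The hard part will be this backward direction, specifically the structural lemma. Unlike the clique-tree decomposition of chordal graphs, the cyclic arrangement sought here lacks a canonical root, so the argument must handle genuinely global obstructions in addition to local ones; I expect the graphs of \cref{fig:Ford-Ind-Sub-NHCA} (beyond those of \cref{fig:Ford-Ind-Sub-Interval}) to correspond precisely to such global obstructions, with $\dag$, $\ddag$, the long claw, and the whipping top encoding the local (asteroidal-triple-like) obstructions inherited from the interval case, and the remaining graphs encoding the additional ``circular'' obstructions. Delineating this correspondence case by case, and verifying that the enumerated obstructions really are exhaustive, is where the bulk of the technical work lies.
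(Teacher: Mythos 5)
This statement is not proved in the paper at all: it is quoted verbatim from Cao, Grippo and Safe~\cite{ForbIndSubNHCAG} (their Theorem~1) and used as a black box, so there is no in-paper argument to compare yours against. Judged on its own terms, your proposal is a plausible outline of how such a theorem is proved, but it is not a proof. The forward direction is essentially fine in spirit (the identification of $3$-acyclic with normal Helly via \cref{lem:AK3.6} is correct, the wheel argument is believable, and the per-graph checks for \cref{fig:Ford-Ind-Sub-Interval,fig:Ford-Ind-Sub-NHCA} are finite verifications), though even there you have not actually carried out the case analyses you promise.

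The genuine gap is the backward direction. Your ``structural lemma'' --- that every wheel-free graph excluding the listed subgraphs admits a cyclic arrangement of its maximal cliques in which each vertex's cliques are consecutive --- is precisely the content of the theorem, and you neither prove it nor give a mechanism that could prove it. Saying you will ``enumerate the possible failure modes'' and show ``each one yields either a wheel or one of the listed forbidden subgraphs'' presupposes the exhaustiveness you are trying to establish; without an explicit, complete taxonomy of obstructions and a proof that no others exist, nothing has been shown. This is where the entire difficulty of the Cao--Grippo--Safe result lives (their proof is a substantial, multi-stage argument), and your sketch defers all of it. There is also a small unexamined point in your chordal base case: a chordal graph satisfying your hypotheses is interval by \cref{thm:Char-Forb-Ind-Sub-Interval} only once you have checked that wheel-freeness plus the exclusions imply chordality or handled the non-chordal case separately, and your split into ``chordal'' versus ``has an induced $C_{\geq 4}$'' leaves the second, harder branch entirely to the unproved lemma.
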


\begin{figure}[ht]
    \centering
    \begin{subfigure}[b]{0.1125\textwidth}
        \centering
        \includegraphics[width=\textwidth]{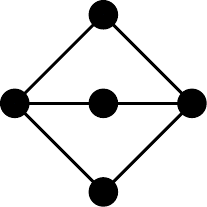}
        \caption{$K_{2,3}$}
        \label{fig:1-1}
    \end{subfigure}
    \hfill
    \begin{subfigure}[b]{0.1125\textwidth}
        \centering
        \includegraphics[width=\textwidth]{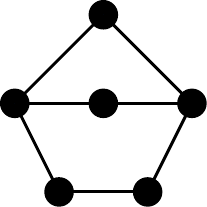}
        \caption{Twin-$C^5$}
        \label{fig:1-2}
    \end{subfigure}
    \hfill
    \begin{subfigure}[b]{0.1125\textwidth}
        \centering
        \includegraphics[width=0.57\textwidth]{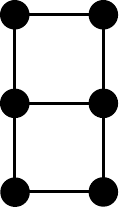}
        \caption{Domino}
        \label{fig:1-3}
    \end{subfigure}
    \hfill
    \begin{subfigure}[b]{0.1125\textwidth}
        \centering
        \includegraphics[width=\textwidth]{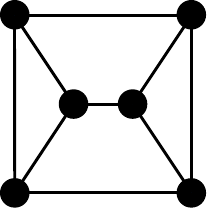}
        \caption{$\bar{C^6}$}
        \label{fig:1-4}
    \end{subfigure}
    \hfill
    \begin{subfigure}[b]{0.1125\textwidth}
        \centering
        \includegraphics[width=\textwidth]{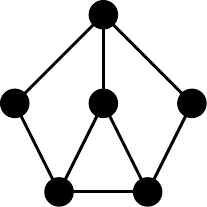}
        \caption{FIS-$1$}
        \label{fig:1-5}
    \end{subfigure}
    \hfill
    \begin{subfigure}[b]{0.1125\textwidth}
        \centering
        \includegraphics[width=\textwidth]{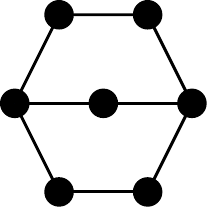}
        \caption{FIS-$2$}
        \label{fig:1-6}
    \end{subfigure}
    \hfill
    \begin{subfigure}[b]{0.125\textwidth}
        \centering
        \includegraphics[width= 0.9 \textwidth]{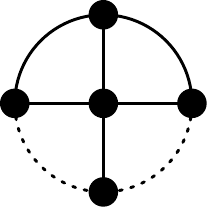}
        \caption{Wheel~$W_n$}
        \label{fig:wheel}
    \end{subfigure}
    \hfill
    \begin{subfigure}[b]{0.1125\textwidth}
        \centering
        \includegraphics[width=\textwidth]{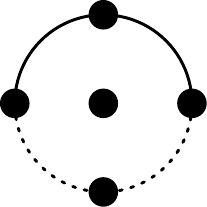}
        \caption{$C_n^*$}
        \label{fig:Cstar}
    \end{subfigure}
    \caption{Non-chordal minimal forbidden induced subgraphs of $3$-acyclic circularc arc graphs, $n \geq 4$.}
    \label{fig:Ford-Ind-Sub-NHCA}
\end{figure}

\begin{figure}[ht]
    \centering
    \centering
    \begin{subfigure}[b]{0.3\textwidth}
        \centering
        \includegraphics[width=0.45\textwidth]{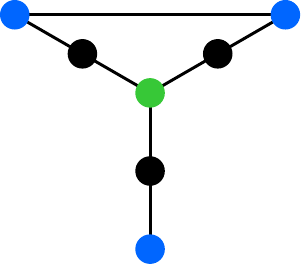}
        \caption{Long Claw plus one edge}
        \label{fig:LC1}
    \end{subfigure}
    \hfill
    \begin{subfigure}[b]{0.3\textwidth}
        \centering
        \includegraphics[width=0.45\textwidth]{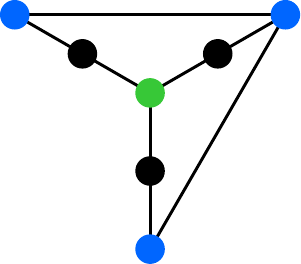}
        \caption{Long Claw plus two edges}
        \label{fig:LC2}
    \end{subfigure}
    \hfill
    \begin{subfigure}[b]{0.3\textwidth}
        \centering
        \includegraphics[width=0.45\textwidth]{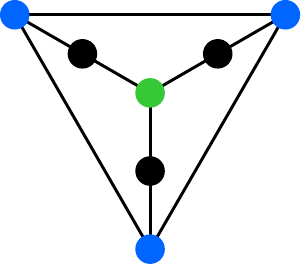}
        \caption{Long Claw plus three edges}
        \label{fig:LC3}
    \end{subfigure}
    \caption{Additional forbidden induced subgraphs for $r$-acyclic circular-arc graphs in the case $r = 4$.}
    \label{fig:Forb-Ind-Sub-4-ACA}
\end{figure}

We are now ready to prove \cref{main:thm:Char-Forb-Ind-Sub-r-ACA}. 

\begin{proof}[Proof of \cref{main:thm:Char-Forb-Ind-Sub-r-ACA}]
    We prove \cref{item:r-ACA:locCov}$\to$\cref{item:r-ACA:locInt}$\to$\cref{item:r-ACA:ForbInd}$\to$\cref{item:r-ACA:locCov}, \cref{item:r-ACA:ForbInd}$\to$\cref{item:r-ACA:r-ACA}, and \cref{item:r-ACA:r-ACA}$\to$\cref{item:r-ACA:ForbInd}.
    \begin{claim}
        \cref{item:r-ACA:locCov} implies \cref{item:r-ACA:locInt}.
    \end{claim}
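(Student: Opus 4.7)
The plan is to exploit two facts already assembled in the paper: first, that the $r$-local covering $p_r \colon G_r \to G$ is $r/2$-ball-preserving by definition, and second, that interval graphs are themselves $r$-locally interval (\cref{lem:IntervalIsLocallyInterval}). Together these make the implication essentially a one-line unwinding of definitions.

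Concretely, I would proceed as follows. Fix an arbitrary vertex $v \in V(G)$, and pick any preimage $\tilde v \in V(G_r)$ with $p_r(\tilde v) = v$; such a vertex exists because $p_r$ is surjective. Since $p_r$ is $r/2$-ball-preserving, it restricts to an isomorphism
\[
    p_r \rest_{B_{G_r}(\tilde v, r/2)} \colon B_{G_r}(\tilde v, r/2) \to B_G(v, r/2).
\]
By assumption \cref{item:r-ACA:locCov}, the graph $G_r$ is interval, so by \cref{lem:IntervalIsLocallyInterval} it is $r$-locally interval. In particular, $B_{G_r}(\tilde v, r/2)$ is an interval graph. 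Being interval is preserved under isomorphism, so $B_G(v, r/2)$ is interval as well. As $v$ was arbitrary, every $r/2$-ball of $G$ is interval, i.e.\ $G$ is $r$-locally interval, which is \cref{item:r-ACA:locInt}.

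There is no substantial obstacle here: the work has been done in setting up the $r$-local covering (so that it preserves $r/2$-balls on the nose) and in establishing \cref{lem:IntervalIsLocallyInterval}. The only point to be a little careful about is that $r/2$-balls are not in general induced subgraphs when $r$ is even, but this is harmless because we transport the ball across an \emph{isomorphism} provided by the covering, not merely an inclusion; the induced-subgraph subtlety is already absorbed into \cref{lem:IntervalIsLocallyInterval}.
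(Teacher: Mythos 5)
Your proposal is correct and follows exactly the paper's argument: apply \cref{lem:IntervalIsLocallyInterval} to the interval graph $G_r$ and then transport each $r/2$-ball along the ball-preserving covering $p_r$. You have merely spelled out the details (choice of preimage, isomorphism of balls) that the paper leaves implicit.
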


    \begin{claimproof}
        By \cref{item:r-ACA:locCov}, the $r$-local cover $G_r$ is interval, and thus \cref{lem:IntervalIsLocallyInterval} yields that $G_r$ is $r$-locally interval. As $p_r$ preserves $r/2$-balls, it follows that $G$ is $r$-locally interval, as desired.
    \end{claimproof}
    \begin{claim}\label{claim:ForbIndSubAreSmall}
        For every graph $F$ depicted in \cref{fig:Ford-Ind-Sub-Interval}, there is a vertex $v_F$ of $F$ such that $F \subseteq B_{F}(v_F, 4/2)$.
        For every graph $F$ depicted in \cref{fig:Forb-Ind-Sub-4-ACA}, there is a vertex $v_F$ such that $F \cap B_{F}(v_F,4/2)$ is the long claw.
    \end{claim}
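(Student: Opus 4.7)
The plan is to verify the claim by direct inspection, choosing an explicit center $v_F$ for each graph in the two figures and checking the two conditions that define $B_F(v_F, 2)$: that every vertex of $F$ lies at graph-distance at most $2$ from $v_F$, and that every edge $xy$ of $F$ satisfies $d_F(v_F, x) + d_F(v_F, y) < 4$.

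For the long claw in \cref{fig:LongClaw}, I would take $v_F$ to be the unique degree-$3$ vertex: its three neighbors lie at distance $1$, the three leaves at distance $2$, each center-to-middle edge has distance sum $0+1 = 1 < 4$, and each middle-to-leaf edge has distance sum $1+2 = 3 < 4$. For the whipping top, $\dag$, and $\ddag$ of \cref{fig:Ford-Ind-Sub-Interval} the approach is analogous: I would identify the natural central vertex --- for the whipping top, a vertex of its central clique; for $\dag$ and $\ddag$, the unique high-degree branch vertex at the meeting point of the paths drawn in the figure --- and verify both conditions by a finite case check on the edges.

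For the graphs in \cref{fig:Forb-Ind-Sub-4-ACA}, each $F$ is the long claw with one, two, or three additional edges between its three leaves. I plan to take $v_F$ to be the degree-$3$ center of the underlying long claw. The key sub-step is to observe that the new leaf-to-leaf edges do not change any of the distances from $v_F$: any path from $v_F$ out to a leaf must leave via one of the middle vertices first, so the distance from $v_F$ to each leaf remains $2$ even after these edges are added. Consequently the six original long-claw edges still satisfy the distance sum condition (with sums $1$ and $3$ as before), while each added leaf-to-leaf edge has distance sum $2+2 = 4 \not< 4$ and is thus excluded from $B_F(v_F, 2)$. Therefore $F \cap B_F(v_F, 2)$ consists of all seven vertices of $F$ together with exactly the six original long-claw edges, which is precisely the long claw.

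The main conceptual observation --- and the reason the claim treats $r = 4$ specially --- is the borderline equality $2 + 2 = 4$: for $r \geq 5$ the added leaf-to-leaf edges would satisfy the strict inequality, survive in the ball, and $F \cap B_F(v_F, 2)$ would no longer be the long claw, which is exactly why the graphs of \cref{fig:Forb-Ind-Sub-4-ACA} appear as additional forbidden subgraphs only at $r = 4$. The main obstacle I expect is purely bookkeeping on \cref{fig:Ford-Ind-Sub-Interval}, especially tracing through $\dag$ and $\ddag$ carefully to choose $v_F$ and enumerate the finitely many edges correctly.
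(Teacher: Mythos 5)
Your proposal is correct and takes essentially the same route as the paper, which simply designates the centre $v_F$ (the green vertex in the figures) and observes that all remaining vertices are at distance at most $2$, with the blue ones at distance exactly $2$; your choices of centre and your computation for \cref{fig:Forb-Ind-Sub-4-ACA} --- in particular that the leaf-to-leaf edges have distance sum $2+2=4\not<4$ and hence drop out of the ball, leaving exactly the long claw --- are exactly right. The only point to be careful about when you carry out the check for $\dag$ and $\ddag$ is that these are infinite families (the dotted lines are paths of arbitrary length), so a literal ``finite case check on the edges'' does not suffice as stated: what makes the radius independent of the path lengths is that the central vertex is adjacent to every vertex on the dotted paths, so those vertices all lie at distance $1$ and only the few peripheral (blue) vertices reach distance $2$.
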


    \begin{claimproof} 
        In both cases, let $v_F$ be the green vertex in the respective figure. The blue vertices are the ones with distance $2$ to $v_F$.
    \end{claimproof}

    \begin{claim}
        \cref{item:r-ACA:locInt} implies \cref{item:r-ACA:ForbInd}.
    \end{claim}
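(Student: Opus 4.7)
The plan is to first extract the coarse structural constraints ($r$-chordality and wheel-freeness) from the local hypothesis, and then to rule out each specific forbidden induced subgraph by relocating it inside a suitable $r/2$-ball, which must be interval and hence cannot contain the subgraphs listed in \cref{thm:Char-Forb-Ind-Sub-Interval}.

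First I would observe that since interval graphs are chordal, every $r$-locally interval graph is $r$-locally chordal; \cref{thm:locally-chordal} then immediately yields $r$-chordality and wheel-freeness. Next, for each graph $F$ from \cref{fig:Ford-Ind-Sub-Interval}, suppose for contradiction that $G$ contains $F$ as an induced subgraph and fix $v_F$ as in \cref{claim:ForbIndSubAreSmall}. I would show that $F$ is, in fact, an induced subgraph of $B_G(v_F, r/2)$. Since $F$ is induced in $G$, we have $d_G \leq d_F$ on $V(F)$, so every vertex of $F$ satisfies $d_G(v_F, \cdot) \leq 2 \leq \lfloor r/2 \rfloor$ and thus lies in the ball; and every edge $xy$ of $F$ satisfies $d_G(v_F,x) + d_G(v_F,y) \leq d_F(v_F,x) + d_F(v_F,y) \leq 3 < r$ (using $F \subseteq B_F(v_F, 4/2)$ from the claim), so it lies in the ball as well. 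Any extra edge the ball might impose on $V(F)$ is a $G$-edge between $F$-vertices, hence an $F$-edge by inducedness; so $F$ is induced in $B_G(v_F, r/2)$, contradicting the interval property of the ball.

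For the $r=4$ case I would treat each $F$ from \cref{fig:Forb-Ind-Sub-4-ACA} similarly, but now aim to produce an induced \emph{long claw} in $B_G(v_F, 2)$. The seven long-claw vertices lie in the ball as before, and the long-claw edges still have distance sum at most $3 < 4$. The delicate point is to prevent the extra leaf--leaf edges of $F$ (those with $d_F$-distance sum exactly $4$ from $v_F$) from sneaking into the ball. This amounts to verifying that $d_G(v_F, b) = 2$ for each leaf $b$: otherwise $v_F b$ would be a $G$-edge between two $F$-vertices, hence an $F$-edge by inducedness, contradicting that $v_F$ is the center of a long claw in $F$. With these distances pinned to $2$, the leaf--leaf edges have $G$-distance sum exactly $4$, which fails the strict inequality $<4$ and so is excluded from $B_G(v_F, 2)$. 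Therefore $B_G(v_F, 2)$ induces exactly the long claw on the seven long-claw vertices, contradicting that the ball is interval.

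The main technical obstacle is this distance bookkeeping in the even-$r$ case, where the ball's strict inequality $d_G(v_F,x) + d_G(v_F,y) < r$ is tight precisely for the extra leaf--leaf edges of \cref{fig:Forb-Ind-Sub-4-ACA}. What makes the argument go through is the inducedness of $F$ in $G$, which forbids any $G$-shortcut that would simultaneously create a new $F$-edge and so rules out the only way these extra edges could be drawn into the ball.
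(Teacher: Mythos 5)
Your proposal is correct and follows essentially the same route as the paper: derive $r$-chordality and wheel-freeness from $r$-local chordality via \cref{thm:locally-chordal}, then use \cref{claim:ForbIndSubAreSmall} to place each forbidden subgraph (or, for $r=4$, the long claw inside the graphs of \cref{fig:Forb-Ind-Sub-4-ACA}) into an $r/2$-ball and contradict \cref{thm:Char-Forb-Ind-Sub-Interval}. The only difference is that you spell out the distance bookkeeping (in particular why the leaf--leaf edges fail the strict inequality defining the even-radius ball) that the paper compresses into the assertion $F \cap B_{F}(v_F,4/2) = F \cap B_{G}(v_F,4/2)$.
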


    \begin{claimproof}
        Since $G$ is $r$-locally interval, it holds that $G$ is $r$-locally chordal. Thus, by \cref{thm:locally-chordal}, $G$ is wheel-free and $r$-chordal.
        
        Suppose for a contradiction that $G$ contains a graph from \cref{fig:Ford-Ind-Sub-Interval} as an induced subgraph $F$. 
        Then, $F$ is contained in $B_F(v_F,4/2) \subseteq B_G(v_F,4/2)$ by \cref{claim:ForbIndSubAreSmall}.
        Since $F$ is an induced subgraph of $G$ and $r \geq 4$, the graph $F$ is an induced subgraph of the interval graph $B_G(v_F,r/2)$, which is a contradiction to \cref{thm:Char-Forb-Ind-Sub-Interval}.
        This completes the proof of the claim for $r \geq 5$. 

        Now assume $r = 4$.
        Suppose for a contradiction that $G$ contains a graph from \cref{fig:Forb-Ind-Sub-4-ACA} as an induced subgraph $F$. Then, there is a vertex $v_F$ of $F$ such that $F \cap B_{F}(v_F,4/2)$ is the long claw depicted in \cref{fig:Ford-Ind-Sub-Interval}.
        Since $F$ is an induced subgraph of $G$ and $v_F$ is a vertex of $G$, it follows that $F \cap B_{F}(v_F,4/2) = F \cap B_{G}(v_F,4/2)$, and thus the long claw is an induced subgraph of $B_{G}(v_F,4/2)$, which contradicts \cref{thm:Char-Forb-Ind-Sub-Interval}. This completes the proof of the claim for $r= 4$.
    \end{claimproof}

    \begin{claim}
        \cref{item:r-ACA:ForbInd} implies \cref{item:r-ACA:locCov}.
    \end{claim}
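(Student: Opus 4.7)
The plan is to reduce to the Boland--Lekkeikerker characterization of interval graphs (Theorem~\ref{thm:Char-Forb-Ind-Sub-Interval}) applied to $G_r$. Since hypothesis~\cref{item:r-ACA:ForbInd} forces $G$ to be $r$-chordal and wheel-free, Theorem~\ref{thm:locally-chordal} directly gives that $G_r$ is chordal, so what remains is to show $G_r$ contains no induced subgraph from Figure~\ref{fig:Ford-Ind-Sub-Interval}. Given such an induced copy $F \subseteq G_r$, Claim~\ref{claim:ForbIndSubAreSmall} will supply a vertex $v_F \in V(F)$ with $F \subseteq B_F(v_F, 4/2)$. Since $F$ is a subgraph of $G_r$, distances in $G_r$ are bounded by distances in $F$, so $F \subseteq B_{G_r}(v_F, r/2)$. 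The covering $p_r$ restricts to an isomorphism $B_{G_r}(v_F, r/2) \to B_G(p_r(v_F), r/2)$, yielding an isomorphic copy $F' \coloneqq p_r(F)$ inside this ball of $G$. The remaining task is to upgrade $F'$ to an induced copy, in $G$ itself, of a graph forbidden by~\cref{item:r-ACA:ForbInd}.

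For $r \geq 5$ this is automatic: every $G$-edge between two vertices of $V(F')$ has endpoints at distance $\leq 2$ from $p_r(v_F)$, so the sum of these distances is at most $4 < 5 \leq r$ and the edge is present in the ball; hence $F'$ is already induced in $G$, contradicting~\cref{item:r-ACA:ForbInd}. The delicate case is $r = 4$, where edges of $G$ between two distance-$2$ vertices are absent from $B_G(p_r(v_F), 2)$, so the $G$-induced subgraph on $V(F')$ may strictly contain $F'$. I will split by the type of $F$. If $F$ is the long claw, the three leaves are exactly the distance-$2$ vertices from the central green vertex $v_F$ and are pairwise non-adjacent in $F$, so the $G$-induced subgraph on $V(F')$ is the long claw plus some subset of the three possible leaf--leaf edges. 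This produces an induced copy in $G$ of either the long claw (Figure~\ref{fig:Ford-Ind-Sub-Interval}) or one of the three graphs in Figure~\ref{fig:Forb-Ind-Sub-4-ACA}, all forbidden by~\cref{item:r-ACA:ForbInd}. For the whipping top, $\dag$, and $\ddag$, I plan to inspect the figures and check that any extra edge between two distance-$2$ (blue) vertices would, together with $v_F$ and its intermediate neighbour(s), create either an induced cycle of length in $\{4,\dots,r\}$ or a wheel $W_n$ with $n \geq 4$; both are excluded by~\cref{item:r-ACA:ForbInd}, so $F'$ is in fact induced in $G$ and again contradicts~\cref{item:r-ACA:ForbInd}.

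Once every graph of Figure~\ref{fig:Ford-Ind-Sub-Interval} has been ruled out as an induced subgraph of $G_r$, Theorem~\ref{thm:Char-Forb-Ind-Sub-Interval} will close the argument by declaring $G_r$ interval, which is~\cref{item:r-ACA:locCov}. The main obstacle is the $r = 4$ case: the missing edges between distance-$2$ vertices in a radius-$2$ ball of $G$ prevent one from directly pulling a forbidden induced subgraph down through $p_r$, so one must enumerate the possible "completions'' of the lifted copy and verify that each is either excluded by the chordal/wheel-free/short-cycle parts of~\cref{item:r-ACA:ForbInd} or lands inside Figure~\ref{fig:Forb-Ind-Sub-4-ACA}. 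This case analysis is exactly what motivates the three additional forbidden subgraphs listed in Figure~\ref{fig:Forb-Ind-Sub-4-ACA}.
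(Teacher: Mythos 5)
Your argument follows the paper's proof essentially step for step: reduce via Boland--Lekkeikerker to excluding the graphs of \cref{fig:Ford-Ind-Sub-Interval} from the chordal graph $G_r$, localize a forbidden copy inside a radius-$2$ ball using \cref{claim:ForbIndSubAreSmall}, push it down through $p_r$, and for $r=4$ handle the possibly missing edges between distance-$2$ vertices by checking that every completion is either a short induced cycle or one of the graphs in \cref{fig:Forb-Ind-Sub-4-ACA}. The only cosmetic difference is that you also allow wheels as an outcome of the non-long-claw completions where the paper cites only induced $4$-cycles; otherwise the approach is the same.
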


    \begin{claimproof}
        By \cref{thm:Char-Forb-Ind-Sub-Interval}, it suffices to show that the $r$-local cover $G_r$ has no graph $F$ depicted in \cref{fig:Ford-Ind-Sub-Interval} as an induced subgraph.
        Suppose for a contradiction that $G_r$ contains a graph depicted in \cref{fig:Ford-Ind-Sub-Interval} as an induced subgraph $F$.
        Then $F$ is contained in $B_F(v_F,4/2) \subseteq B_{G_r}(v_F,4/2)$ by \cref{claim:ForbIndSubAreSmall}.

        First, assume that $r \geq 5$.
        Since $F$ is an induced subgraph of $G_r$ and $r \geq 5$, the graph $F$ is an induced subgraph of the subgraph $B_{G_r}(v_F,5/2)$ of $G_r$.
        As $p_r$ preserves $r/2$-balls and $r \geq 5$, the graph $F$ is also an induced subgraph of the induced subgraph $B_{G}(p_r(v_F), 5/2)$ of $G$, so $F$ is an induced subgraph of $G$, which contradicts that $F$ is not an induced subgraph of $G$ by \cref{item:r-ACA:ForbInd}.

        Now, assume that $r = 4$.
        Since $F$ is an induced subgraph of $G_r$ and $r = 4$, the graph $F$ is an induced subgraph of the subgraph $B_{G_r}(v_F,4/2)$ of $G_r$.
        As $p_r$ preserves $r/2$-balls and $r = 4$, the graph $F$ is also an induced subgraph of $B_{G}(p_r(v_F), 4/2)$.
        Then only the blue vertices of $F$ in \cref{fig:Ford-Ind-Sub-Interval} have distance $2$ to $p_r(v_F)$ in $G$. 
        If $F$ is not the long claw, then it is a simple check that every potential edge of $G$ between some pair of the blue vertices yields an induced cycle of length $4$ in $G$, which contradicts that $G$ is in particular $4$-chordal by \cref{item:r-ACA:ForbInd}.
        Thus assume that $F$ is the long claw.
        Then, the addition of the potential edge of $G$ between pairs of blue vertices in \cref{fig:LongClaw} gives one of the two graphs depicted in \cref{fig:Forb-Ind-Sub-4-ACA}, which then is an induced subgraph of $G$, contradicting \cref{item:r-ACA:ForbInd}.
    \end{claimproof}

    \begin{claim}
        \cref{item:r-ACA:ForbInd} implies \cref{item:r-ACA:r-ACA}.
    \end{claim}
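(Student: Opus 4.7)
The plan is to apply Cao--Grippo--Safe's characterization of $3$-acyclic circular-arc graphs (\cref{thm:Char-Forb-Ind-Sub-N-H-CA}) to obtain a $3$-acyclic circular-arc representation of $G$, and then to upgrade its acyclicity from $3$ to $r$ by invoking \cref{thm:AK-thm5}. To set the stage, I would use the equivalences \cref{item:r-ACA:ForbInd}$\Leftrightarrow$\cref{item:r-ACA:locInt} already established in this proof to conclude that $G$ is $r$-locally interval, and in particular $r$-locally chordal.

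Next, I would verify that $G$ contains no induced subgraph from \cref{fig:Ford-Ind-Sub-NHCA}, so that \cref{thm:Char-Forb-Ind-Sub-N-H-CA} applies. Wheels are excluded by wheel-freeness. Each of $K_{2,3}$, Twin-$C^5$, Domino, $\bar{C^6}$, FIS-$1$, and FIS-$2$ contains an induced $4$-cycle (a short case check), and is therefore excluded by $r$-chordality since $r \geq 4$. The delicate family is $C_n^*$ for $n \geq 4$: when $n \leq r$ its induced $n$-cycle is forbidden by $r$-chordality, and for $n > r$ I would exploit that $G$ is $r$-locally interval to exhibit an induced copy of one of the graphs of \cref{fig:Ford-Ind-Sub-Interval} --- or, when $r=4$, of \cref{fig:Forb-Ind-Sub-4-ACA} --- inside a single $r/2$-ball of $G$, contradicting $r$-local intervalness.

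With these hypotheses verified, \cref{thm:Char-Forb-Ind-Sub-N-H-CA} produces a $3$-acyclic circular-arc representation $v \mapsto C_v$ of $G$ on a cycle $C$. This is in particular a $3$-acyclic region representation of $G$ over $C$ (by \cref{lem:AK3.6}, normal Helly equals $3$-acyclic), so \cref{thm:AK-thm5} applies: since $G$ is $r$-locally chordal, the representation $v \mapsto C_v$ is $r$-acyclic. Equivalently, $\bigcup_{x \in X} C_x$ is acyclic for every $X \subseteq V(G)$ with $|X| \leq r$, which is exactly the statement that $G$ is $r$-acyclic circular-arc, establishing \cref{item:r-ACA:r-ACA}.

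The step I expect to be the main obstacle is ruling out $C_n^*$ when $n > r$. There $r$-chordality no longer kills the long induced cycle, and one has to translate the local structure around the ``attachment'' vertices of $C_n^*$ into a forbidden configuration sitting inside a single $r/2$-ball of $G$, using \cref{item:r-ACA:locInt} together with \cref{thm:Char-Forb-Ind-Sub-Interval}. This is also the point at which the additional forbidden subgraphs of \cref{fig:Forb-Ind-Sub-4-ACA} for $r=4$ must be brought to bear, since for $r=4$ the long claw alone fits inside an $r/2$-ball without forcing any larger member of \cref{fig:Ford-Ind-Sub-Interval}.
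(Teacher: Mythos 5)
Your reduction is the paper's: check that $G$ excludes every graph in \cref{fig:Ford-Ind-Sub-NHCA}, invoke \cref{thm:Char-Forb-Ind-Sub-N-H-CA} to obtain a $3$-acyclic circular-arc representation, and upgrade it to an $r$-acyclic one via $r$-local chordality and \cref{thm:AK-thm5}. That skeleton is fine, and the final upgrading step is carried out correctly. The problems are in the exclusion step. The concrete error is FIS-$2$: it does \emph{not} contain an induced $4$-cycle --- its shortest induced cycle has length $5$ --- so $r$-chordality excludes it only for $r \geq 5$. For $r = 4$, FIS-$2$ is precisely the long claw plus two edges depicted in \cref{fig:LC2}, and the paper excludes it by appealing to the additional forbidden subgraphs of \cref{fig:Forb-Ind-Sub-4-ACA} listed in \cref{item:r-ACA:ForbInd}. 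This is the one place in this implication where those extra graphs are actually needed; your proposal never uses them there, and instead predicts they will be needed for $C_n^*$, which is a misdiagnosis.

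Second, your treatment of $C_n^*$ for $n > r$ is not a proof: ``I would exploit that $G$ is $r$-locally interval to exhibit an induced copy of one of the graphs of \cref{fig:Ford-Ind-Sub-Interval}'' names a goal rather than an argument, and you yourself flag it as the main obstacle. The paper performs no such case split: it disposes of $C_n^*$ together with $K_{2,3}$, Twin-$C^5$, the domino, $\bar{C^6}$ and FIS-$1$ in one stroke, by exhibiting an induced $4$-cycle and invoking $4$-chordality. So the case you single out as hard is, in the paper, not a separate case at all, while the case that genuinely requires the $r=4$ machinery (FIS-$2$) is the one you dismiss with an incorrect claim. As written, the proposal therefore has both a false step and an unfinished one.
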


    \begin{claimproof}
        First, we note that it suffices to show that $G$ has none of the graphs depicted in \cref{fig:Ford-Ind-Sub-NHCA} as induced subgraphs. 
        Suppose $G$ forbids each graph in \cref{fig:Ford-Ind-Sub-NHCA}. By \cref{item:r-ACA:ForbInd}, $G$ forbids each graph in \cref{fig:Ford-Ind-Sub-Interval}. Then $G$ is a 3-acyclic circular-arc graph by \cref{thm:Char-Forb-Ind-Sub-N-H-CA}. 
        As $G$ is $r$-locally chordal by \cref{item:r-ACA:ForbInd} and \cref{thm:locally-chordal}, it follows that $G$ is in fact $r$-acyclic circular-arc by \cref{thm:AK-thm5}. Therefore, it is sufficient to show that $G$ forbids each graph in \cref{fig:Ford-Ind-Sub-NHCA} as an induced subgraph.

        Every graph $F$ depicted in \cref{fig:Ford-Ind-Sub-NHCA} except the one in \ref{fig:1-6} contains an induced cycle of length $4$, and thus is not an induced subgraph of $G$ by \cref{item:r-ACA:ForbInd}.
        It remains to show that the graph $F$ depicted in \cref{fig:1-6} is not an induced subgraph of $G$. Since $F$ contains an induced cycle of length 5, it follows from \cref{item:r-ACA:ForbInd} that $G$ excludes $F$ as an induced subgraph when $r \geq 5$. 
        If $r = 4$, then the induced subgraph $F$ of $G$ is precisely the graph depicted in \cref{fig:LC2}, which is a contradiction to \cref{item:r-ACA:ForbInd}.
    \end{claimproof}

    \begin{claim}
        \cref{item:r-ACA:r-ACA} implies \cref{item:r-ACA:ForbInd}.
    \end{claim}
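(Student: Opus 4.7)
The plan is to exploit the fact that $r$-acyclic circular-arc is inherited by induced subgraphs. Since $r \geq 4 \geq 3$, the graph $G$ is in particular $3$-acyclic circular-arc, so \cref{thm:Char-Forb-Ind-Sub-N-H-CA} immediately gives that $G$ is wheel-free and forbids every graph in \cref{fig:Ford-Ind-Sub-Interval} as an induced subgraph. For $r$-chordality, I would observe that the given $r$-acyclic circular-arc representation is in particular a $3$-acyclic region representation, so \cref{thm:AK-thm5} forces $G$ to be $r$-locally chordal; by \cref{thm:locally-chordal}, this yields $r$-chordality.

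It remains to handle the three ``long claw plus edges'' graphs of \cref{fig:Forb-Ind-Sub-4-ACA}. For $r \geq 5$ each of them contains an induced $5$-cycle and is therefore already ruled out by $r$-chordality, so the only non-trivial case is $r = 4$. The key claim in this case is that none of the three graphs is circular-arc at all. Since induced subgraphs of circular-arc graphs are circular-arc (by restricting the representation), this will immediately imply that $G$, being circular-arc, cannot contain any of them as an induced subgraph.

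To prove the claim, fix such a graph $F$ and suppose for contradiction that $v \mapsto C_v$ is a circular-arc representation of $F$ on a host cycle $C$. Use the obvious notation: central vertex $v$, inner vertices $a, b, c$ adjacent to $v$, outer vertices $a', b', c'$ with $a \sim a'$, $b \sim b'$, $c \sim c'$, and the added edge(s) all lying among $\{a', b', c'\}$. Whichever edge(s) are added, $F$ contains an induced $5$-cycle of the form $v - x - x' - y' - y - v$ for some distinct $x, y \in \{a, b, c\}$; let $z \in \{a, b, c\} \setminus \{x, y\}$ be the remaining inner vertex. Because $C_5$ is not an interval graph, the five arcs $C_v, C_x, C_{x'}, C_{y'}, C_y$ must cover the whole host cycle $C$ (otherwise the subrepresentation would be an interval representation of $C_5$); in particular, $C_x, C_{x'}, C_{y'}, C_y$ cover $C \setminus C_v$. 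Now $C_z$ meets $C_v$ but none of $C_x, C_y, C_{x'}, C_{y'}$, so $C_z$ cannot leave $C_v$ without hitting one of those four arcs; hence $C_z \subseteq C_v$. Finally, $C_{z'}$ meets $C_z \subseteq C_v$, so $C_{z'} \cap C_v \neq \emptyset$, contradicting that $z'$ is not adjacent to $v$.

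The main obstacle is not logical depth but rather bookkeeping: one has to verify in each configuration of added edges (one, two, or three edges among the outer vertices, up to symmetry) that the required induced $5$-cycle exists and that the remaining inner vertex $z$ together with its outer companion $z'$ is non-adjacent to the other four $C_5$-vertices and to $v$. This is a straightforward case check that simply tracks which added edges do or do not involve $z'$.
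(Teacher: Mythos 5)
Your proof is correct and follows essentially the same route as the paper: invoke the Cao--Grippo--Safe characterization (\cref{thm:Char-Forb-Ind-Sub-N-H-CA}) to exclude the graphs of \cref{fig:Ford-Ind-Sub-Interval}, and for $r=4$ observe that the graphs of \cref{fig:Forb-Ind-Sub-4-ACA} are not circular-arc. You merely supply details the paper leaves implicit, namely the derivation of $r$-chordality via \cref{thm:AK-thm5} and an explicit (and correct) argument that the long-claw-plus-edges graphs admit no circular-arc representation.
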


    \begin{claimproof}
        By \cref{thm:Char-Forb-Ind-Sub-N-H-CA} and $r \geq 4 > 3$, an $r$-acyclic circular-arc graph $G$ contains none of the graphs depicted in \cref{fig:Ford-Ind-Sub-Interval} as induced subgraphs.
        This completes the proof for $r \geq 5$.
        For $r = 4$, it suffices to observe that none of the graphs in \cref{fig:Forb-Ind-Sub-4-ACA} is circular-arc.
    \end{claimproof}
    
    This completes the proof of \cref{main:thm:Char-Forb-Ind-Sub-r-ACA}.
\end{proof}

\section{Characterization of 3-locally chordal graphs}

Now, we revisit the case of \cref{conj:loc-I-is-CAG} where $r \leq 3$. For $r \leq 2$, \cref{conj:loc-I-is-CAG} is trivially wrong: every graph is $0$-, $1$-, and $2$-locally interval, since the relevant balls are stars and stars are interval graphs, but there are graphs, e.g.\ the long claw depicted in \cref{fig:LongClaw}, which are not circular-arc.

For $r = 3$, \cref{conj:loc-I-is-CAG} fails for the same reason: the long claw is still $3$-locally interval but not a circular-arc graph. However, every 3-acyclic circular-arc graph is 3-locally interval by definition. 
One may characterize $3$-locally interval graphs by forbidden induced subgraphs, as follows. A vertex $v$ in a graph $G$ is an \defn{apex vertex} if $v$ is adjacent to every other vertex in $G$. 

\begin{mainresult}\label{prop:ForbInducedSubgraphLocInt}
    Let $G$ be a finite graph. Then the following are equivalent:
    \begin{enumerate}
        \item $G$ is $3$-locally interval.
        \item $G$ is wheel-free (equivalently: $3$-locally chordal) and contains none of the graphs depicted in \cref{fig:Ford-Ind-Sub-Interval} with an apex vertex added as an induced subgraph.
    \end{enumerate}
\end{mainresult}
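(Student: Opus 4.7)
The plan hinges on the identity $B_G(v,3/2) = G[N_G[v]]$. Indeed, $\lfloor 3/2\rfloor = 1$, so the vertex set of the ball is $N_G[v]$; and any two neighbours $x,y$ of $v$ satisfy $d(v,x)+d(v,y) = 2 < 3$, so every edge of $G[N_G[v]]$ is retained in the ball. Crucially, $v$ itself is then an apex vertex of $B_G(v,3/2)$. The equivalence thus reduces to the interval characterisation of \cref{thm:Char-Forb-Ind-Sub-Interval} applied locally at each $v$, combined with the observation that for $r=3$ the ``$r$-chordal'' condition of \cref{thm:locally-chordal} is vacuous, so $3$-local chordality collapses to wheel-freeness.

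For (1)$\Rightarrow$(2), I would first note that $3$-local intervality implies $3$-local chordality, hence wheel-freeness by \cref{thm:locally-chordal}. Now suppose for contradiction that $G$ contained as an induced subgraph some $F$ obtained from $F_0 \in \{\text{long claw, whipping top, }\dag, \ddag\}$ by adding an apex vertex $u$. All vertices of $F$ lie in $N_G[u]$, so $F$, and hence also its induced subgraph $F_0 = F-u$, sits inside $B_G(u,3/2)=G[N_G[u]]$ as an induced subgraph. Since $F_0$ is not interval by \cref{thm:Char-Forb-Ind-Sub-Interval}, this contradicts the assumption that $B_G(u,3/2)$ is interval.

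For (2)$\Rightarrow$(1), I fix an arbitrary vertex $v$ and verify that $G[N_G[v]] = B_G(v,3/2)$ is interval via \cref{thm:Char-Forb-Ind-Sub-Interval}; chordality is immediate from wheel-freeness and \cref{thm:locally-chordal}. Suppose for contradiction that some $F_0 \in \{\text{long claw, whipping top, }\dag, \ddag\}$ is induced in $G[N_G[v]]$. If $v \in V(F_0)$, then $v$ is adjacent in $F_0$ to every other vertex, making $v$ an apex of $F_0$; but a picture-level inspection shows none of the four graphs admits an apex vertex (each contains an induced path on at least three vertices, which rules out full domination). Hence $v \notin V(F_0)$, so $V(F_0) \subseteq N_G(v)$, and then $G[V(F_0) \cup \{v\}]$ is precisely $F_0$ with the apex $v$ added -- a forbidden induced subgraph of $G$, contradicting hypothesis (2).

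The only mild obstacle is that last picture-level check, namely that none of the four Boland-Lekkeikerker obstructions is dominated by a single vertex; once this is granted, everything else is bookkeeping around the identification $B_G(v,3/2)=G[N_G[v]]$ and two applications each of \cref{thm:locally-chordal} and \cref{thm:Char-Forb-Ind-Sub-Interval}.
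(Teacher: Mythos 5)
Your proof is correct and takes essentially the same route as the paper: identify $B_G(v,3/2)$ with $G[N_G[v]]$, note $v$ is an apex of that ball, and transfer the Boland--Lekkeikerker obstructions back and forth between the ball and the apexed graphs. One small caveat: your parenthetical justification that ``an induced path on at least three vertices rules out full domination'' is not a valid general principle (a wheel contains induced $P_3$'s on its rim yet has a dominating hub); the underlying claim --- that none of the four obstructions in \cref{fig:Ford-Ind-Sub-Interval} has a vertex adjacent to all others --- is nonetheless true and is verified by direct inspection, which is exactly what the paper does as well.
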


\begin{proof}
    Every $3$-locally interval graph is $3$-locally chordal, as interval graphs are chordal. It is easy to check that each graph in \cref{fig:Ford-Ind-Sub-Interval} with an apex vertex added is not 3-locally interval. 
    Conversely, let $G$ be a $3$-locally chordal graph such that $G$ contains none of the graphs depicted in \cref{fig:Ford-Ind-Sub-Interval} with an added apex vertex as induced subgraph.
    Consider a vertex $v$ of $G$ and the chordal $3/2$-ball $B \coloneqq B_{G}(v, 3/2)$ around $v$.
    By \cref{thm:Char-Forb-Ind-Sub-Interval}, it suffices to show that $B$ contains none of the graphs depicted in \cref{fig:Ford-Ind-Sub-Interval} as an induced subgraph.
    Suppose for a contradiction that $B$ contains a graph from \cref{fig:Ford-Ind-Sub-Interval} as an induced subgraph $B'$.
    Since $v$ is adjacent to all other vertices in $B$ but no vertex in the graphs depicted in \cref{fig:Ford-Ind-Sub-Interval} is adjacent to all other vertices in that graph, $v$ is not a vertex of $B'$. Thus, the subgraph induced on the vertex set of $B'$ together with $v$ is one of the induced subgraphs forbidden in $G$.
\end{proof}

\printbibliography

@misc{canonicalGD,
  title={Canonical graph decompositions via coverings},
  author={Diestel, Reinhard and Jacobs, Raphael W. and Knappe, Paul and Kurkofka, Jan},
  eprint = {2207.04855},
  archiveprefix = {arXiv},
  year = {2022},
  note = {To appear in \emph{Transactions of the AMS}}
}

@article{ForbIndSubNHCAG,
  title={Forbidden induced subgraphs of normal Helly circular-arc graphs: Characterization and detection},
  author={Cao, Yixin and Grippo, Luciano N and Safe, Mart{\'\i}n D},
  journal={Discrete Applied Mathematics},
  volume={216},
  pages={67--83},
  year={2017},
  publisher={Elsevier}
}

@article{ForbIndSubInterval,
author = {Lekkeikerker, C. and Boland, J.},
journal = {Fundamenta Mathematicae},
keywords = {topology},
number = {1},
pages = {45-64},
title = {Representation of a finite graph by a set of intervals on the real line},
volume = {51},
year = {1962},
}

@unpublished{localGlobalChordal, 
    title = {The global structure of locally chordal graphs}, 
    author = {Abrishami, Tara and Knappe, Paul},
    note = {Preprint}
}

@unpublished{LocallyChordal,
    title = {Locally chordal graphs},
    author = {Tara Abrishami and Paul Knappe and Jonas Kobler},
    year = {2025}, 
    journal = {arXiv preprint},
    eprint = {2501.17320},
    eprinttype = {arxiv}
}

@article{halininterval,
  title={Some remarks on interval graphs},
  author={Halin, Rudolf},
  journal={Combinatorica},
  volume={2},
  number={3},
  pages={297--304},
  year={1982},
  publisher={Springer}
}

\end{document}